\date{}
\newenvironment{proof}{\par\noindent{\bf Proof \,}}{$\hfill \Box$\par\bigskip}
\newtheorem{thm}{Theorem}[section]
\newtheorem{lem}[thm]{Lemma}
\newtheorem{prop}[thm]{Proposition}
\newtheorem{cor}[thm]{Corollary}
\newtheorem{df}[thm]{Definition}
\newtheorem{ex}[thm]{Example}
\begin{document}

\title{Strongly Nil $*$-Clean Rings}

\author{Huanyin CHEN, Abdullah HARMANCI\, and  A. \c Ci\u gdem \" OZCAN\thanks{Corresponding author}}

\maketitle

\begin{abstract} A $*$-ring $R$ is called {\em strongly nil $*$-clean} if every element of $R$ is the sum of a
projection and a nilpotent element that commute with each other.
In this paper we prove that $R$ is a strongly nil $*$-clean
ring if and only if every idempotent in $R$ is a projection, $R$
is periodic, and $R/J(R)$ is Boolean. For any commutative $*$-ring
$R$ with $\mu^*=\mu$, $\eta^*=\eta\in R$, the algebraic extension $R[i]=\{ a+bi\, |\, a,b\in R,\, i^2=\mu i+\eta\,\}$ is strongly nil $*$-clean if and only if $R$
is strongly nil $*$-clean and $\mu\eta$ is nilpotent. We also
prove that a $*$-ring $R$ is commutative, strongly
nil $*$-clean and every primary ideal is maximal if and only if every element of $R$ is a projection.\\\\
\noindent {\bf 2010 Mathematics Subject Classification :} 16W10, 16U99\\
\noindent {\bf Key words}: rings with involution; strongly nil
$*$-clean ring; algebraic extension; $*$-Boolean-like ring.
\end{abstract}

\section{Introduction}

Let $R$ be an associative ring with unity. A ring $R$ is called
{\em strongly nil clean} if every element of $R$ is the sum of an
idempotent and a nilpotent that commute. These rings are first
discovered by Hirano-Tominaga-Yakub \cite{HTY} and were refered to as [E-N]-representable rings.
In \cite{D} and \cite{D1}, Diesl refers to this class as strongly nil clean and studies their properties.
Studying strongly nil cleanness is also relevent for Lie algebra. The decomposition of
matrices as in the definition of strongly nil cleanness over a
field must be the Jordan-Chevalley decomposition in Lie theory.
 An {\em involution}
of a ring $R$ is an operation $* :R \rightarrow R$ such that
$(x+y)^* = x^*+y^*$, $(xy)^* = y^*x^*$ and $(x^*)^* = x$ for all
$x, y \in R$. A ring $R$ with involution $*$ is called a {\it
$*$-ring}. An element $p$ in a $*$-ring $R$ is called a {\it
projection} if $p^2 = p = p^*$ (see \cite{B}). Recently the concept of strongly
clean rings are considered for any $*$-ring. Va\v{s} \cite{Va} calls a $*$-ring $R$ {\it strongly $*$-clean} if each of its elements is the sum
of a projection and a unit that commute with each other (see also \cite{ LZ}).

In this paper, we adapt strongly nil cleanness to $*$-rings. We
call a $*$-ring $R$ {\em strongly nil $*$-clean} if every element
of $R$ is the sum of a projection and a nilpotent element that
commute.  The paper
consists of three parts. In Section 2, we characterize the class of strongly nil $*$-clean rings on several different ways. For example, we show that a ring $R$ is a strongly nil $*$-clean ring if and
only if every idempotent in $R$ is a projection, $R$ is periodic,
and $R/J(R)$ is Boolean. In Section 3, we prove a result related
to the strongly nil $*$-cleanness of a
commutative $*$-ring and its algebraic extension. For a commutative
$*$-ring $R$ with $\mu^*=\mu$, $\eta^*=\eta\in R$, $R[i]=\{ a+bi~|~a,b\in R,$ $i^2=\mu i+\eta~\}$ is strongly nil $*$-clean if and only if
$R$ is strongly nil $*$-clean and $\mu\eta$ is nilpotent. Foster
\cite{F} introduced the concept of Boolean-like rings as a
generalization of Boolean rings in the commutative case. We adopt the concept of Boolean-like rings to rings with involution
and prove that a $*$-ring $R$ is $*$-Boolean-like if and only
if $R$ is strongly nil $*$-clean and $\alpha\beta=0$ for all
nilpotent elements $\alpha$, $\beta$ in $R$. In the last section,
we investigate submaximal ideals (\cite{S}) of strongly nil $*$-clean rings; and also define $*$-Boolean rings as $*$-rings over which every element is a projection and characterize them in terms of strongly nil $*$-cleanness.

Throughout this paper all rings are associative with unity (unless
otherwise noted). We write $J(R)$, $N(R)$ and $U(R)$ for the Jacobson radical of a
ring $R$, the set of all nilpotent elements in $R$ and the set of all units in $R$, respectively.
The ring of all polynomials in one variable over $R$ is denoted by $R[x]$.

\section{Characterization Theorems}

The main purpose of this section is to explore the structure of strongly nil $*$-clean rings. A ring $R$ is called {\it uniquely nil clean} if, for any $x\in R$, there exists a unique idempotent $e\in R$ such
that $x-e\in N(R)$ \cite{D}. If, in addition, $x$ and $e$ commute, $R$ is called {\it uniquely strongly nil clean} \cite{HTY}.
Strongly nil cleanness and uniquely strongly nil cleanness are equivalent by \cite[Theorem 3]{HTY}.
Analogously, for a $*$-ring, we define {\em uniquely strongly nil $*$-clean rings} by replacing ``idempotent" with
``projection" in the definition of uniquely strongly nil clean rings.

We will use the following lemma frequently.

\begin{lem}\label{21} {\rm \cite[Lemma 2.1]{LZ}} Let $R$ be a $*$-ring. If every idempotent in $R$ is a projection, then $R$ is abelian, i.e. every idempotent in $R$ is central.
\end{lem}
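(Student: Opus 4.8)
The plan is to fix an arbitrary idempotent $e \in R$ and an arbitrary $r \in R$ and to show that $e$ commutes with $r$; since $e$ and $r$ are arbitrary this gives that $R$ is abelian. The standard device is to manufacture from $e$ and $r$ a new idempotent, invoke the hypothesis to conclude it is a projection (hence self-adjoint), and read off a relation killing an off-diagonal ``corner''. Note first that because $e$ is itself idempotent, the hypothesis already gives $e^* = e$, and that $1^* = 1$, so $(1-e)^* = 1-e$; these identities will be used repeatedly to simplify involutions.

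First I would set $f = e + (1-e)re$ and verify by direct multiplication, using $e^2 = e$ and hence $e(1-e) = (1-e)e = 0$, that $f^2 = f$. By hypothesis $f$ is then a projection, so $f = f^*$. Expanding $f^* = e^* + e^* r^* (1-e)^* = e + e r^* (1-e)$ and comparing with $f = e + (1-e)re$ yields $(1-e)re = e r^* (1-e)$. Multiplying this equation on the left by $e$ and using $e(1-e)=0$ makes the right-hand side $e^2 r^*(1-e) = e r^*(1-e)$ equal to the left-hand side $e(1-e)re = 0$, so $e r^*(1-e)=0$, and feeding this back shows $(1-e)re = 0$, i.e. $re = ere$.

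Next, rather than repeat the whole computation with the mirror idempotent $e + er(1-e)$, I would simply apply the involution to the identity $(1-e)re = 0$. Using $(xy)^* = y^*x^*$ together with $e^* = e$ and $(1-e)^* = 1-e$ gives $e r^* (1-e) = 0$; since $r$ ranges over all of $R$ and $r \mapsto r^*$ is a bijection of $R$, this is exactly $et(1-e) = 0$ for every $t \in R$, i.e. $er = ere$. Combining the two relations gives $re = ere = er$, so $e$ is central.

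The only point that needs care—hardly a genuine obstacle—is to keep track of the two places the hypothesis is applied: once to get $e^* = e$, which is what makes the cross terms in $f^*$ collapse, and once to get $f^* = f$. Everything else is formal manipulation in the Peirce corners determined by $e$ and $1-e$, and the symmetry supplied by the involution is precisely what lets me avoid carrying out the second, mirror-image calculation by hand.
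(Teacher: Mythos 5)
Your proof is correct. The paper itself gives no argument for this lemma (it is quoted from \cite[Lemma 2.1]{LZ}), and your proof is the standard one used there: form the corner idempotent $e+(1-e)re$, use the hypothesis twice (once for $e^*=e$, once for $f^*=f$) to kill the Peirce corner $(1-e)Re$, with the minor and pleasant economy that you obtain the other corner $eR(1-e)=0$ by applying the involution to the first identity rather than redoing the computation with the mirror idempotent $e+er(1-e)$.
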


\begin{prop}\label{prop21} Let $R$ be a $*$-ring. Then the following are equivalent.
\begin{enumerate}
\item[\rm{(i)}] $R$ is strongly nil $*$-clean;
\item[\rm{(ii)}] $R$ is uniquely nil clean and every idempotent in $R$ is a projection;
\item[\rm{(iii)}] $R$ is uniquely strongly nil $*$-clean.
\end{enumerate}
\end{prop}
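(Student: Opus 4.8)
The plan is to prove the cycle of implications $(i)\Rightarrow(ii)\Rightarrow(iii)\Rightarrow(i)$, of which the last is essentially definitional: a uniquely strongly nil $*$-clean ring is in particular strongly nil $*$-clean, since for each $x$ the guaranteed projection $p$ with $x-p\in N(R)$ and $xp=px$ furnishes exactly the required decomposition $x=p+(x-p)$.

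For $(i)\Rightarrow(ii)$, I would first observe that every strongly nil $*$-clean ring is strongly nil clean, because a projection is in particular an idempotent; hence by \cite[Theorem 3]{HTY} it is uniquely strongly nil clean, and therefore uniquely nil clean. It then remains to show that every idempotent of $R$ is a projection. Given an idempotent $e$, I would take a strongly nil $*$-clean decomposition $e=p+n$ with $p$ a projection (so in particular an idempotent) and $n$ nilpotent. Since both $e$ (trivially, as $e-e=0$) and $p$ are idempotents differing from $e$ by a nilpotent element, the uniqueness clause in the uniquely nil clean property forces $e=p$, so $e$ is a projection.

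For $(ii)\Rightarrow(iii)$, the hypothesis that every idempotent is a projection lets me invoke Lemma \ref{21} to conclude that $R$ is abelian, i.e. all idempotents are central. Now fix $x\in R$ and let $e$ be the unique idempotent with $x-e\in N(R)$ provided by unique nil cleanness. Centrality of $e$ gives $ex=xe$, and by hypothesis $e$ is a projection, so $x=e+(x-e)$ is a strongly nil $*$-clean decomposition. Uniqueness of the projection follows immediately: any projection $p$ with $x-p\in N(R)$ is an idempotent at nilpotent distance from $x$, hence equals $e$ by unique nil cleanness. Thus $R$ is uniquely strongly nil $*$-clean.

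The genuinely load-bearing steps are the two appeals to uniqueness. In $(i)\Rightarrow(ii)$ everything hinges first on \cite[Theorem 3]{HTY} to upgrade strongly nil cleanness to unique nil cleanness, and then on that uniqueness to identify an arbitrary idempotent $e$ with the projection $p$ sitting at nilpotent distance from it; this is the step I expect to demand the most care, since without the uniqueness there is no reason a given idempotent should be self-adjoint. By contrast, once Lemma \ref{21} delivers commutativity, the passage $(ii)\Rightarrow(iii)$ becomes routine, as centrality automatically promotes every nil clean decomposition to a commuting one.
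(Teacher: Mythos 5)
Your $(iii)\Rightarrow(i)$ and $(ii)\Rightarrow(iii)$ steps are fine, but the opening of $(i)\Rightarrow(ii)$ contains a genuine error: the claim ``uniquely strongly nil clean, and therefore uniquely nil clean'' is false in general, and in fact runs in the wrong direction. In this paper's terminology (following \cite{HTY}), the uniqueness in ``uniquely strongly nil clean'' is only among \emph{commuting} decompositions, i.e.\ among idempotents $e$ with $x-e\in N(R)$ \emph{and} $ex=xe$, whereas ``uniquely nil clean'' demands uniqueness among \emph{all} idempotents at nilpotent distance. By \cite[Theorem 4]{HTY}, as quoted in Section 2, a uniquely nil clean ring must be abelian, while a strongly nil clean (equivalently, uniquely strongly nil clean) ring need not be: the ring of $2\times 2$ upper triangular matrices over $\mathbb{Z}_2$ has $N(R)$ an ideal and $R/N(R)\cong \mathbb{Z}_2\times\mathbb{Z}_2$ Boolean, so it is strongly nil clean, yet for $x=\left(\begin{smallmatrix}1&1\\0&0\end{smallmatrix}\right)$ both $x$ itself and $\left(\begin{smallmatrix}1&0\\0&0\end{smallmatrix}\right)$ are idempotents at nilpotent distance from $x$. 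Consequently, at the moment you invoke ``the uniqueness clause in the uniquely nil clean property'' to force $e=p$, that property is not available; it cannot be established before one knows $R$ is abelian, which is exactly what the projection statement (via Lemma~\ref{21}) is supposed to deliver, so the argument as written is circular.

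The repair is small and preserves what is genuinely different about your route. Given an idempotent $e$, write $e=p+n$ with $p$ a projection, $n$ nilpotent and $pn=np$; then $ep=p+np=p+pn=pe$, so \emph{both} $e=e+0$ and $e=p+n$ are commuting nil clean decompositions of $e$, and the uniqueness you legitimately obtain from \cite[Theorem 3]{HTY} (uniqueness among commuting decompositions) already forces $p=e$. Once every idempotent is a projection, Lemma~\ref{21} makes $R$ abelian, and only then does uniquely strongly nil clean upgrade to uniquely nil clean, since centrality of idempotents makes every nil clean decomposition automatically a commuting one. For comparison, the paper avoids uniqueness entirely at this step: it first shows $R$ is strongly $*$-clean by decomposing $2-x=e+w$ and writing $x=(1-e)+(1-w)$, then quotes \cite[Theorem 2.2]{LZ} to conclude that idempotents are projections, and finishes with \cite[Theorem 3]{HTY}; your corrected argument replaces the appeal to \cite{LZ} with a direct uniqueness computation, which is more self-contained.
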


\begin{proof} If $R$ is strongly nil $*$-clean, then $R$ is strongly $*$-clean. For, if $x\in R$, then there exist a projection $e\in R$ and $w\in N(R)$ such that $2-x=e+w$ and $ew=we$. This gives that $x=(1-e)+(1-w)$ where $1-e$ is a projection and $1-w\in U(R)$.
If $R$ is strongly $*$-clean, then every idempotent in $R$ is a projection by \cite[Theorem 2.2]{LZ}.
By \cite[Theorem 3]{HTY}, the proof is completed.
\end{proof}

We note that the condition ``every idempotent in $R$ is a projection" in Proposition~\ref{prop21} is necessary as the following example shows.

\begin{ex}\label{ex22}  \rm Let $R=\big \{\left(\begin{array}{ll}
 0& 0\\
 0& 0
\end{array} \right), \left(\begin{array}{ll}
 1& 0\\
 0& 1
\end{array} \right),\left(\begin{array}{ll}
 1& 1\\
 0& 0
\end{array} \right),\left(\begin{array}{ll}
 0& 1\\
 0& 1
\end{array} \right)\big\}$ where $0,1\in \mathbb{Z}_2$.  Define $*: R\rightarrow R$, $\left(\begin{array}{ll}
 a& b\\
 c& d
\end{array} \right) \mapsto \left(\begin{array}{cc}
 a+b& b\\
 a+b+c+d& b+d
\end{array} \right)$. Then $R$ is a commutative $*$-ring with the usual matrix addition and multiplication. In
fact, $R$ is Boolean. Thus, for any $x\in R$, there exists a
unique idempotent $e\in R$ such that $x-e\in R$ is nilpotent.
But it is not strongly nil $*$-clean because the only projections are the trivial projections and there does not exist a projection $e$ in $R$ such that $\left(\begin{array}{ll}
 1& 1\\
 0& 0
\end{array} \right)-e$ is nilpotent.
\end{ex}

On the other hand, in \cite[Theorem 3]{HTY}, it is proved that $R$ is strongly nil clean if and only if $N(R)$ is an ideal and $R/N(R)$ is Boolean.
Also, $R$ is uniquely nil clean if and only if $R$ is abelian, $N(R)$ is an ideal and $R/N(R)$ is Boolean \cite[Theorem 4]{HTY}.  So if we adopt these results to rings with involution, immediately we have the following theorem by using Proposition~\ref{prop21}. But we give a new proof to the necessity.

\begin{thm}\label{thm23} Let $R$ be a $*$-ring. Then $R$ is strongly nil $*$-clean
if and only if
\begin{enumerate}
\item[\rm{(1)}] Every idempotent in $R$ is a projection;
\item[\rm{(2)}] $N(R)$ forms an ideal;
\item[\rm{(3)}] $R/N(R)$ is Boolean.
\end{enumerate}
\end{thm}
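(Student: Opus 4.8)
The plan is to prove both directions, disposing of sufficiency quickly and giving a self-contained argument for necessity that goes through the Jacobson radical rather than through \cite[Theorem 4]{HTY}. For sufficiency, assume (1)--(3). By (1) and Lemma~\ref{21} the ring $R$ is abelian, so every idempotent is central. Given $x\in R$, its image in the Boolean ring $R/N(R)$ is idempotent; since $N(R)$ is a nil ideal by (2), idempotents lift modulo $N(R)$, so there is an idempotent $e\in R$ with $x-e\in N(R)$. By (1), $e$ is a projection, and being central it commutes with $x$. Hence $x=e+(x-e)$ is a projection plus a commuting nilpotent, so $R$ is strongly nil $*$-clean.

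For necessity, suppose $R$ is strongly nil $*$-clean. Then (1) holds by Proposition~\ref{prop21}, and $R$ is abelian by Lemma~\ref{21}; I would use centrality of idempotents throughout. The remaining task is (2) and (3), which I would obtain by showing $N(R)=J(R)$ and that $R/J(R)$ is Boolean. The first step is that $J(R)$ is nil. Given $a\in J(R)$, write $a=p+n$ with $p$ a central projection, $n\in N(R)$, and $pn=np$; then $n$ commutes with $1-a$, hence with $u:=(1-a)^{-1}$. A short computation gives $p(1-a)=-pn$, so $p=-pnu$. Since $nu$ is nilpotent and commutes with $p$, iterating yields $p=(-1)^{m}p(nu)^{m}$ for every $m$, which vanishes once $(nu)^{m}=0$. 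Thus $p=0$ and $a=n\in N(R)$, so $J(R)\subseteq N(R)$.

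The heart of the argument is the second step: $S:=R/J(R)$ is reduced. As $J(R)$ is a nil $*$-ideal, idempotents lift modulo $J(R)$, so every idempotent of $S$ is the image of a (central) idempotent of $R$ and is therefore central; hence $S$ is abelian and again strongly nil $*$-clean. Let $\bar a\in S$ with $\bar a^{2}=0$, and for arbitrary $\bar s\in S$ decompose $\bar s\bar a=\bar e+\bar q$ with $\bar e$ a central idempotent and $\bar q$ a commuting nilpotent. From $\bar a^{2}=0$ one gets $\bar a\bar e=\bar e\bar a=-\bar q\bar a$, and feeding this back produces the relation $\bar e=-\bar s\bar q\bar a-\bar q\bar e$. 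Iterating this relation and using $\bar q^{k}=0$ writes $\bar e$ as a left multiple of $\bar a$, say $\bar e=M\bar a$; then $\bar e\bar a=M\bar a^{2}=0$, and centrality forces $\bar e=\bar e^{2}=M(\bar a\bar e)=0$. Hence $\bar s\bar a$ is nilpotent for every $\bar s$, so $S\bar a$ is a nil left ideal, whence $S\bar a\subseteq J(S)=0$ and $\bar a=0$. Passing to a suitable power reduces a general nilpotent to the square-zero case, so $S$ is reduced. To finish, for $x=p+n\in R$ the identity $x^{2}-x=n\bigl((2p-1)+n\bigr)$, in which $2p-1$ squares to $1$ and $(2p-1)+n$ is therefore a unit, shows $x^{2}-x\in N(R)$; projecting to the reduced ring $S$ gives $\bar x^{2}=\bar x$, so $S$ is Boolean. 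Finally, reducedness of $S$ forces $N(R)\subseteq J(R)$, which together with Step~1 gives $N(R)=J(R)$, an ideal (this is (2)), and $R/N(R)=S$ is Boolean (this is (3)).

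The main obstacle is the reducedness step, because it is exactly the point where one must control products involving nilpotents, and in a general ring a product of nilpotents need not be nilpotent. What makes the iteration collapse is abelianness: centrality of the idempotent part $\bar e$ is what lets me commute it past $\bar s$ to derive the self-referential relation and then conclude $\bar e=0$. I expect everything before and after this step to be routine bookkeeping once $R/J(R)$ is known to be reduced.
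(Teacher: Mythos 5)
Your proof is correct, but the necessity direction takes a genuinely different route from the paper's. The shared ingredient is the computation $x^2-x=n\bigl((2p-1)+n\bigr)\in N(R)$; from there the paper deduces that $R$ is strongly $\pi$-regular and then simply cites Badawi's theorem (\cite[Theorem 3]{Ba}: in an abelian $\pi$-regular ring $N(R)$ is an ideal) to get condition (2), and cites \cite[Theorem 3]{HTY} for the converse. You instead make the whole argument self-contained by proving $N(R)=J(R)$: the inclusion $J(R)\subseteq N(R)$ via the unit-iteration trick $p=(-1)^m p(nu)^m$, and the reverse inclusion by showing $R/J(R)$ is reduced --- your iteration $\bar e=-\bar s\bar q\bar a-\bar q\bar e$, which collapses by centrality of $\bar e$ and nilpotency of $\bar q$, plus the standard fact that nil left ideals lie in the radical. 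Your sufficiency argument (lifting idempotents modulo the nil ideal $N(R)$) is likewise a direct rendering of what \cite[Theorem 3]{HTY} provides. What the paper's proof buys is brevity by leaning on the literature; what yours buys is independence from both external citations, and as a by-product it establishes $N(R)=J(R)$, a fact the paper only obtains later (in Lemma~\ref{lem27}) by an argument that itself relies on Theorem~\ref{thm23} --- so your derivation is not only non-circular (it uses only Proposition~\ref{prop21} and Lemma~\ref{21}, which precede the theorem) but would also let the paper streamline that later lemma. All the individual steps check out: the identity $p(1-a)=-pn$, the commutation of $n$ with $(1-a)^{-1}$, the self-referential relation for $\bar e$ and its collapse to $\bar e=M\bar a=0$, and the reduction of a general nilpotent to the square-zero case are each verified correctly.
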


\begin{proof} Assume that $R$ is strongly nil $*$-clean. In view of Lemma~\ref{21}, for any $x\in R$, there exist an idempotent $g\in R$ and a
nilpotent element $v\in R$ such that $x=g+v$ and $gv=vg$. Thus, $x^2=g+(2g+v)v$, and
so $x-x^2=(1-2g-v)v\in N(R)$. Write $(x-x^2)^m=0$, and
so $x^m\in x^{m+1}R$. This shows that $R$ is strongly
$\pi$-regular. According to \cite[Theorem 3]{Ba}, $N(R)$ forms an
ideal of $R$. Further, $x-x^2\in N(R)$, and so $R/N(R)$ is
Boolean.
The converse is obvious by \cite[Theorem 3]{HTY}.\end{proof}

A ring $R$ is called {\em strongly $J$-$*$-clean} if for
any $x\in R$ there exists a unique projection $e\in R$ such that
$x-e\in J(R)$ \cite{CHO}.

\begin{lem}\label{lem27} Let $R$ be a $*$-ring. Then $R$ is strongly nil $*$-clean
if and only if
\begin{enumerate}
\item[\rm{(1)}] $R$ is strongly $J$-$*$-clean;
\item[\rm{(2)}] $J(R)$ is nil.
\end{enumerate}
\end{lem}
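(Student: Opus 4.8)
The plan is to funnel both directions through the single fact that $J(R)=N(R)$ for every strongly nil $*$-clean ring. Once that equality is available, the clause ``$x-e\in J(R)$'' in the definition of strongly $J$-$*$-cleanness and the clause ``$x-e$ is nilpotent'' in the definition of strongly nil $*$-cleanness become interchangeable, so the two notions differ only in bookkeeping. I will therefore first prove necessity, extracting $J(R)=N(R)$ from Theorem~\ref{thm23}, and then read off sufficiency almost directly.

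For necessity, assume $R$ is strongly nil $*$-clean. By Theorem~\ref{thm23}, $N(R)$ is an ideal and $R/N(R)$ is Boolean. Being a nil ideal, $N(R)\subseteq J(R)$; and since $N(R)\subseteq J(R)$ we may use $J(R)/N(R)=J(R/N(R))$, which vanishes because a Boolean ring is semiprimitive (for $a=a^2\in J(R/N(R))$ the unit $1-a$ satisfies $a(1-a)=0$, forcing $a=0$). Hence $J(R)=N(R)$, and in particular $J(R)$ is nil, which is (2). For (1), any $x$ admits a decomposition $x=e+w$ with $e$ a projection, $w$ nilpotent and $ew=we$; then $x-e=w\in N(R)=J(R)$, so the required projection exists and commutes with $x$. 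To see it is unique, let $e,f$ be projections with $x-e,\,x-f\in J(R)=N(R)$. Then $e-f\in N(R)$ is nilpotent, and since every idempotent is central by Lemma~\ref{21}, $e$ and $f$ commute, so an elementary expansion gives $(e-f)^3=e-f$; iterating yields $e-f=(e-f)^{2k+1}=0$ once $2k+1$ exceeds the index of nilpotence, i.e. $e=f$. Thus $R$ is strongly $J$-$*$-clean.

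Conversely, assume $R$ is strongly $J$-$*$-clean with $J(R)$ nil. Given $x\in R$, take the (commuting) projection $e$ with $x-e\in J(R)$ provided by the definition; as $J(R)$ is nil, $w:=x-e$ is nilpotent and commutes with $e$, so $x=e+w$ is a projection plus a commuting nilpotent. Hence $R$ is strongly nil $*$-clean.

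I expect the only real work to lie in necessity. The delicate inclusion is $J(R)\subseteq N(R)$: it is not formal and relies on passing to the Boolean quotient $R/N(R)$, on its semiprimitivity, and on the identity $J(R/N(R))=J(R)/N(R)$, which is legitimate precisely because $N(R)\subseteq J(R)$. The second point needing care is the uniqueness of the projection, where the centrality of idempotents (Lemma~\ref{21}) and the nilpotence of $e-f$ must be combined through the relation $(e-f)^3=e-f$ to force the difference to zero; without centrality this collapse fails.
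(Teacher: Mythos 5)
Your necessity half is correct, and it is in fact more self-contained than the paper's: you obtain $J(R)\subseteq N(R)$ by passing to the Boolean quotient (using $J(R/N(R))=J(R)/N(R)$, legitimate since $N(R)\subseteq J(R)$, plus semiprimitivity of Boolean rings), where the paper argues directly that any $x\in J(R)$ has its projection $e=x-(x-e)\in J(R)$, hence $e=0$ and $x\in N(R)$; and you prove existence and uniqueness of the projection by hand, where the paper routes through Proposition~\ref{prop21} and \cite[Theorem 3.3]{CHO}. All the ingredients you use there check out: $N(R)$ is an ideal by Theorem~\ref{thm23}, so $e-f\in N(R)$; every idempotent is a projection and hence central by Theorem~\ref{thm23}(1) and Lemma~\ref{21}; and for commuting idempotents $(e-f)^3=e-f$, which together with nilpotence forces $e=f$.

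The sufficiency half, however, has a genuine gap. The paper's definition of strongly $J$-$*$-clean asserts only that for each $x$ there exists a \emph{unique} projection $e$ with $x-e\in J(R)$; it contains no commutation requirement. Your step ``take the (commuting) projection $e$ \ldots provided by the definition'' therefore assumes the very point at issue: that $e$ can be chosen to commute with $x$, equivalently that $R$ is abelian. This is not a formality; it is exactly what the paper imports from \cite[Proposition 2.1]{CHO} (strongly $J$-$*$-clean implies strongly $*$-clean, whence every idempotent is a projection and $R$ is abelian, via \cite[Theorem 2.2]{LZ} and Lemma~\ref{21}) and from \cite[Theorem 3.3]{CHO} (which produces the decomposition $x=e+w$ with $w\in J(R)$ and $xe=ex$). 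Nor is there an obvious shortcut: the standard trick for ``unique idempotent implies abelian'' (compare $f=e+er(1-e)$ with $f=f+0$) does not transfer, because $er(1-e)$, though square-zero, need not lie in $J(R)$, so the uniqueness hypothesis cannot be applied to that second decomposition -- and at this stage of the converse you do not yet know $N(R)$ is an ideal, so you cannot replace $J(R)$ by $N(R)$ either. To complete your proof you must either invoke the CHO results as the paper does, or supply an argument that the unique-projection property forces every idempotent of $R$ to be a central projection.
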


\begin{proof} Suppose that $R$ is strongly nil $*$-clean.
In view of Theorem~\ref{thm23}, $N(R)$ forms an ideal of $R$, and this gives that $N(R)\subseteq J(R)$.
On the other hand, for any $x\in J(R)$, there exists a projection $e\in R$ such that $x-e\in N(R)$.
Then $e=x-(x-e)\in J(R)$. This shows that $e=0$, and so $x$ is nilpotent. That is $J(R)$ is nil, and so $N(R)=J(R)$.
In view of Proposition~\ref{prop21}, we can see that there exists a unique projection $e\in R$ such that $x-e\in J(R)$.
Hence $R$ is strongly $J$-$*$-clean by \cite[Theorem 3.3]{CHO}.

Conversely, assume that (1) and (2) hold. In view of \cite[Proposition 2.1]{CHO}, $R$ is strongly $*$-clean.
Thus, $R$ is abelian. Let $x\in R$. By virtue of \cite[Theorem 3.3]{CHO}, there exist a
projection $e\in R$ and a $w\in J(R)$ such that $x=e+w$ and
$xe=ex$. As $J(R)$ is nil, $w\in R$ is nilpotent. Therefore $R$ is
strongly nil $*$-clean.\end{proof}

From Lemma~\ref{lem27} and \cite[Proposition 2.1]{CHO}, it follows that

\begin{center} $\{$strongly nil $*$-clean$\}\subset \{$strongly $J$-$*$-clean$\}\subset
\{$strongly $*$-clean$\}.$
\end{center}
The first inclusion is strict, because, for example, the power series ring $\mathbb
{Z}_2[[x]]$ is strongly $J$-$*$-clean but not strongly nil $*$-clean where $*$ is the identity involution by \cite[Example 2.5(5)]{C3}.  The second inclusion is also strict by \cite[Example 2.2(2)]{CHO}.

We should note that a strongly nil clean ring may not be strongly $J$-clean (see \cite[Example on p. 3799]{C3}).
Hence strongly nil clean and strongly nil $*$-clean classes have different behavior when compared to classes of strongly $J$-clean and strongly $J$-$*$-clean classes respectively.

\begin{lem}\label{lem28} Let $R$ be a $*$-ring. Then $R$ is strongly nil $*$-clean
if and only if
\begin{enumerate}
\item[\rm{(1)}] Every idempotent in $R$ is a projection;
\item[\rm{(2)}] $J(R)$ is nil;
\item[\rm{(3)}] $R/J(R)$ is Boolean.
\end{enumerate}
\end{lem}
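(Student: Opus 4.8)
The plan is to deduce this characterization from the two descriptions already established, namely Theorem~\ref{thm23} and Lemma~\ref{lem27}, by observing that for a strongly nil $*$-clean ring the nilpotents and the Jacobson radical coincide. Both implications will hinge on the identity $N(R)=J(R)$, so the whole argument reduces to transporting conditions between $N(R)$ and $J(R)$.

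For the necessity, suppose $R$ is strongly nil $*$-clean. Condition~(1) is exactly part~(1) of Theorem~\ref{thm23}. For~(2), Lemma~\ref{lem27} gives that $J(R)$ is nil. Moreover, the argument used there already yields $N(R)=J(R)$: by Theorem~\ref{thm23} the set $N(R)$ is an ideal, hence $N(R)\subseteq J(R)$, while $J(R)$ being nil forces $J(R)\subseteq N(R)$. Substituting $N(R)=J(R)$ into part~(3) of Theorem~\ref{thm23} shows that $R/J(R)=R/N(R)$ is Boolean, which is~(3).

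For the sufficiency, assume (1)--(3). The key step is again to prove $N(R)=J(R)$ and then to invoke Theorem~\ref{thm23}. One inclusion is immediate: since $J(R)$ is nil by~(2), every element of $J(R)$ is nilpotent, so $J(R)\subseteq N(R)$. For the reverse inclusion I would use that a Boolean ring is reduced, for if $\bar a^{\,2}=\bar a$ holds for every element then any nilpotent $\bar a$ satisfies $\bar a=\bar a^{\,2}=\cdots=0$. Hence, given $a\in N(R)$, its image in the Boolean ring $R/J(R)$ is nilpotent, therefore zero, so $a\in J(R)$; this gives $N(R)\subseteq J(R)$ and hence $N(R)=J(R)$. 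Consequently $N(R)$ is an ideal and $R/N(R)=R/J(R)$ is Boolean by~(3), while~(1) supplies the idempotent-is-projection hypothesis. Theorem~\ref{thm23} then delivers that $R$ is strongly nil $*$-clean.

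The two directions are thus almost formal once the equality $N(R)=J(R)$ is in place, so there is no serious obstacle; the only point that genuinely uses the hypotheses is the reverse inclusion $N(R)\subseteq J(R)$ in the converse, where the Boolean (hence reduced) structure of $R/J(R)$ is precisely what rules out nilpotents lying outside $J(R)$. Everything else is bookkeeping around Theorem~\ref{thm23} and Lemma~\ref{lem27}.
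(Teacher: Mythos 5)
Your proposal is correct, and the necessity direction coincides with the paper's (both combine Theorem~\ref{thm23} and Lemma~\ref{lem27}, noting $N(R)=J(R)$ because a nil ideal lies in the radical and a nil radical lies in $N(R)$). The sufficiency direction, however, is genuinely different. The paper argues constructively: since $R/J(R)$ is Boolean, $x+J(R)$ is an idempotent of $R/J(R)$; since $J(R)$ is nil, idempotents lift modulo $J(R)$, so there is an idempotent $e\in R$ (a projection by (1)) with $x-e\in J(R)\subseteq N(R)$, and $xe=ex$ by Lemma~\ref{21}, which exhibits the strongly nil $*$-clean decomposition by hand. You instead prove the equality $N(R)=J(R)$: the inclusion $J(R)\subseteq N(R)$ is immediate from (2), and for $N(R)\subseteq J(R)$ you use that a Boolean ring is reduced, so every nilpotent of $R$ maps to $0$ in $R/J(R)$. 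With that equality, your hypotheses (2) and (3) become exactly conditions (2) and (3) of Theorem~\ref{thm23}, whose sufficiency half then finishes the proof. Both arguments are sound; yours avoids the idempotent-lifting step entirely, at the cost of leaning on the converse of Theorem~\ref{thm23} (which the paper imports from Hirano--Tominaga--Yaqub), while the paper's argument is self-contained apart from the standard fact that idempotents lift modulo a nil ideal and has the merit of producing the projection explicitly.
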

\begin{proof} Assume that $(1),(2)$ and $(3)$ hold. For any $x\in
R$, $x+J(R)=x^2+J(R)$. As $J(R)$ is nil, every idempotent in $R$
lifts modulo $J(R)$. Thus, we can find an idempotent $e\in R$ such
that $x-e\in J(R)\subseteq N(R)$. By Lemma~\ref{21}, $xe=ex$, and so the result follows.
The converse is by Theorem~\ref{thm23} and Lemma~\ref{lem27}.\end{proof}

Recall that a ring $R$ is {\em periodic} if for any $x\in R$,
there exist distinct $m,n\in \mathbb{N}$ such that $x^m=x^n$. With
this information we can now prove the following.

\begin{thm}\label{thm29} Let $R$ be a $*$-ring. Then $R$ is strongly nil $*$-clean
if and only if
\begin{enumerate}
\item[\rm{(1)}] Every idempotent in $R$ is a projection;
\item[\rm{(2)}] $R$ is periodic;
\item[\rm{(3)}] $R/J(R)$ is Boolean.
\end{enumerate}
\end{thm}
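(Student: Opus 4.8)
The plan is to pivot off Lemma~\ref{lem28}, since conditions (1) and (3) of the present theorem coincide verbatim with conditions (1) and (3) there. Thus the whole statement reduces to trading ``$J(R)$ is nil'' for ``$R$ is periodic'' in the presence of the other two conditions, and I would organize the argument as two implications.

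For the forward direction, assume $R$ is strongly nil $*$-clean. Lemma~\ref{lem28} immediately yields (1) and (3), so only periodicity (2) needs to be shown. I would fix $x \in R$ and write $x = g + v$ with $g$ an idempotent, $v$ nilpotent and $gv = vg$; by Lemma~\ref{21}, $g$ is central. Since $R/N(R)$ is Boolean (Theorem~\ref{thm23}) we have $2\cdot 1 \in N(R)$, hence $2^{s}=0$ in $R$ for some $s$, while $v^{k} = 0$ for some $k$. Expanding $x^{n} = (g+v)^{n}$ by the binomial theorem and using that $g^{2}=g$ is central gives, for all $n \ge k$, an expression of the form $x^{n} = g\sum_{j=0}^{k-1}\binom{n}{j}v^{j}$. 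The point is then that the vector of residues $\big(\binom{n}{1},\dots,\binom{n}{k-1}\big)$ modulo $2^{s}$ ranges over the finite set $(\mathbb{Z}/2^{s}\mathbb{Z})^{k-1}$, so by the pigeonhole principle two distinct exponents $m,n\ge k$ produce the same residue vector; since $2^{s}=0$ kills each difference $\binom{n}{j}-\binom{m}{j}$ against $v^{j}$, this forces $x^{m}=x^{n}$, proving $R$ periodic.

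For the converse, assume (1), (2) and (3). By Lemma~\ref{lem28} it suffices to prove that $J(R)$ is nil. Given $x\in J(R)$, periodicity supplies distinct $m>n$ with $x^{m}=x^{n}$, whence $x^{n}\big(1-x^{m-n}\big)=0$; as $x^{m-n}\in J(R)$ the element $1-x^{m-n}$ is a unit, so $x^{n}=0$ and $x$ is nilpotent. Hence $J(R)$ is nil, and Lemma~\ref{lem28} gives that $R$ is strongly nil $*$-clean.

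I expect the binomial-coefficient pigeonhole step in the forward direction to be the only real obstacle; the converse is a short unit-cancellation argument and the reduction to Lemma~\ref{lem28} is bookkeeping. One subtlety to handle carefully is the accounting in the binomial expansion: isolating the $j=0$ term (which contributes $g$) and confirming that every surviving power $g^{\,n-j}$ collapses to $g$ because $n-j\ge 1$ whenever $n\ge k$ and $0\le j\le k-1$.
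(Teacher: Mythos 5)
Your proof is correct, and its skeleton matches the paper's: both directions pivot on Lemma~\ref{lem28}, and your converse (periodicity forces $J(R)$ nil via $x^n(1-x^{m-n})=0$ and invertibility of $1-x^{m-n}$) is precisely the argument the paper leaves implicit when it asserts ``$J(R)$ is nil as $R$ is periodic.'' Where you genuinely diverge is the periodicity step of the forward direction. The paper notes $x-x^2\in N(R)$ (Theorem~\ref{thm23}), expands $(x-x^2)^m=0$ into $x^m=x^{m+1}f(x)$ with $f(x)\in\mathbb{Z}[x]$, and then invokes Herstein's theorem in the form of Chacron's criterion \cite[Proposition 2]{Cha} to conclude that $R$ is periodic. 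You instead argue from scratch: the decomposition $x=g+v$ with $gv=vg$, $g^2=g$, $v^k=0$, together with $2\cdot 1\in N(R)$ (so $2^s\cdot 1=0$), gives $x^n=g\sum_{j=0}^{k-1}\binom{n}{j}v^j$ for all $n\geq k$, and since each coefficient only matters modulo $2^s$, the pigeonhole principle on the residue vectors $\big(\binom{n}{1},\dots,\binom{n}{k-1}\big)$ produces distinct $m,n$ with $x^m=x^n$. Your bookkeeping is sound: the binomial theorem needs only $gv=vg$ (centrality of $g$ is not even required), $g^{n-j}=g$ whenever $n-j\geq 1$, the $j=0$ terms agree automatically since $\binom{n}{0}=\binom{m}{0}=1$, and $2^s\cdot 1=0$ annihilates each difference $\big(\binom{n}{j}-\binom{m}{j}\big)v^j$. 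What your route buys is self-containedness --- no appeal to an external periodicity criterion --- at the cost of a longer computation; the paper's route is shorter but rests entirely on the cited theorem of Herstein--Chacron. Both are complete proofs.
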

\begin{proof} Suppose that $R$ is strongly nil $*$-clean. By
virtue of Lemma~\ref{lem28}, every idempotent in $R$ is a
projection and $R/J(R)$ is Boolean. For any $x\in R$, $x-x^2\in
N(R)$. Write $(x-x^2)^m=0$, and so $x^m=x^{m+1}f(x)$, where
$f(x)\in {\Bbb Z}[x]$. According to Herstein's Theorem (cf.
~\cite[Proposition 2]{Cha}), $R$ is periodic. Conversely, $J(R)$
is nil as $R$ is periodic. Therefore the proof is completed by
Lemma~\ref{lem28}.\end{proof}

\begin{prop}\label{prop210} A $*$-ring $R$ is strongly nil $*$-clean if and only if
\begin{enumerate}
\item[\rm{(1)}] $R$ is strongly $*$-clean;
\item[\rm{(2)}] $N(R)=\{ x\in R~|~1-x\in U(R)\}$.
\end{enumerate}
\end{prop}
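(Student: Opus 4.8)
The plan is to handle the two directions by exploiting the elementary fact that $w$ nilpotent forces $1-w$ invertible, so that the inclusion $N(R)\subseteq\{x\in R\mid 1-x\in U(R)\}$ holds in every ring. Consequently condition~(2) carries content only through its reverse inclusion: that $1-x\in U(R)$ should force $x\in N(R)$. The whole proposition then turns on the bookkeeping identity ``$2-x=a+b\;\Rightarrow\;x=(1-a)+(1-b)$'', which converts a decomposition of $2-x$ into one of $x$ while sending $a$ to its complement $1-a$ and $b$ to $1-b$; this is exactly the device already used in the proof of Proposition~\ref{prop21}.

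For the forward direction, assume $R$ is strongly nil $*$-clean. Item~(1) is immediate: $R$ is strongly $*$-clean, as recorded in the proof of Proposition~\ref{prop21} (writing $2-x=e+w$ with $e$ a projection and $w$ nilpotent gives $x=(1-e)+(1-w)$ with $1-w\in U(R)$). For item~(2) I would prove the nontrivial inclusion using the structure already available: by Theorem~\ref{thm23}, $N(R)$ is an ideal and $R/N(R)$ is Boolean. If $1-x\in U(R)$, then $\overline{1-x}$ is a unit of the Boolean ring $R/N(R)$; but $1$ is the only unit of a Boolean ring, so $\overline{1-x}=\overline 1$, i.e.\ $\overline x=\overline 0$ and $x\in N(R)$. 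Alternatively one can argue directly: writing $x=e+n$ with $e$ a projection, $n$ nilpotent, and $e$ central by Lemma~\ref{21}, invertibility of $1-x=(1-e)-n$ forces the idempotent $e$ to be nilpotent, hence $e=0$ and $x=n\in N(R)$.

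For the converse, assume (1) and (2) and fix $x\in R$. Applying strong $*$-cleanness to $2-x$ yields a projection $e$ and a unit $u$ with $eu=ue$ and $2-x=e+u$. Then $x=(1-e)+(1-u)$, where $1-e$ is again a projection and, by commutativity, $(1-e)(1-u)=(1-u)(1-e)$. The decisive point is that $1-u$ is now nilpotent: since $1-(1-u)=u\in U(R)$, the reverse inclusion of condition~(2) gives $1-u\in N(R)$. Hence $x$ is the sum of a projection and a commuting nilpotent, so $R$ is strongly nil $*$-clean.

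The main obstacle is recognizing that condition~(2) is precisely the hypothesis needed to turn the ``unit part'' of a strong $*$-clean decomposition into a nilpotent: the easy inclusion of (2) is free, while its converse is the exact bridge between invertibility and nilpotence that makes the complementation trick produce a nilpotent summand. Everything else reduces to the routine complementation identity together with the already-established characterization in Theorem~\ref{thm23}.
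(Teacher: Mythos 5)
Your proof is correct, and its converse direction coincides with the paper's: the paper applies strong $*$-cleanness to $a-1$, observes $1-(a-e)\in U(R)$, and invokes (2) to get $a-e\in N(R)$; your shift by $2$ with the complementation $x=(1-e)+(1-u)$ is the same translation trick in slightly different clothing. Where you genuinely diverge is the forward direction, in how condition (2) is established. The paper first shows $N(R)=J(R)$ (via the proof of Lemma~\ref{lem27}) and then quotes the external characterization of strongly $J$-$*$-clean rings, \cite[Theorem 3.5]{CHO}, which identifies $J(R)$ with $\{x\in R~|~1-x\in U(R)\}$. You instead stay inside the paper: Theorem~\ref{thm23} gives that $N(R)$ is an ideal with $R/N(R)$ Boolean, the image of a unit is a unit, and the only unit of a Boolean ring is $1$, so $1-x\in U(R)$ forces $x\in N(R)$; your alternative direct argument (a central idempotent $e$ with $(1-e)-n$ invertible and $n$ nilpotent commuting with $e$ must satisfy $e=0$, since $-en$ would be a nilpotent unit of the corner ring $eRe$) is also valid. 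Your route is more elementary and self-contained, avoiding the strongly $J$-$*$-clean machinery and the external reference entirely; the paper's route has the side benefit of exhibiting the equality $N(R)=J(R)$ and placing the proposition within the $J$-$*$-clean framework developed in \cite{CHO}.
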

\begin{proof} Suppose that $R$ is strongly nil $*$-clean. By the proof of Lemma~\ref{lem27}, $N(R)=J(R)$.
Since $R$ is strongly $J$-$*$-clean, $N(R)=\{ x\in R~|~1-x\in
U(R)\}$ by \cite[Theorem 3.5]{CHO}.

Conversely, assume that $(1)$ and $(2)$ hold. Let $a\in R$. Then
we can find a projection $e\in R$ such that $(a-1)-e\in U(R)$ and
$e(a-1)=(a-1)e$. That is, $(1-a)+e\in U(R)$. As $1-(a-e)\in U(R)$,
by hypothesis, $a-e\in N(R)$. In addition, $ea=ae$. Accordingly, $R$ is
strongly nil $*$-clean.\end{proof}

Let $R$ be a $*$-ring. Define $*: R[x]/(x^n)\to R[x]/(x^n)$ by $a_0+a_1x+\cdots +a_{n-1}x^{n-1}+(x^n)\mapsto a_0^*+a_1^*x+\cdots
+a_{n-1}^*x^{n-1}+(x^n)$. Then $R[x]/(x^n)$ is a $*$-ring (cf. \cite{LZ}).

\begin{cor}\label{cor211} Let $R$ be a $*$-ring. Then $R$ is strongly nil $*$-clean
if and only if so is $R[x]/(x^n)$ $(n\geq 1)$.
\end{cor}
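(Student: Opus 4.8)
The plan is to reduce everything to the internal characterization of strongly nil $*$-clean rings given in Lemma~\ref{lem28}: such rings are exactly the $*$-rings for which (1) every idempotent is a projection, (2) the Jacobson radical is nil, and (3) the quotient by the radical is Boolean. Setting $S=R[x]/(x^n)$, I would prove that each of these three conditions holds for $R$ if and only if it holds for $S$; the stated equivalence then follows at once from Lemma~\ref{lem28} applied in both directions.

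First I would record the structural facts about $S$. The involution is coefficientwise, so $x^*=x$ and $xS=(x)/(x^n)$ is a nilpotent $*$-ideal, while the evaluation $\pi\colon S\to R$, $x\mapsto 0$, is a surjective $*$-homomorphism with kernel $xS$. Since $xS$ is nil, $xS\subseteq J(S)$, so $J(S)=\pi^{-1}(J(R))=J(R)+xS$ and consequently $S/J(S)\cong R/J(R)$. The latter isomorphism gives condition (3) for $S$ if and only if it holds for $R$. For condition (2), the inclusion $J(R)\subseteq J(S)$ (via constants) shows that nilness of $J(S)$ forces nilness of $J(R)$; conversely, if $J(R)$ is nil then for $a\in J(S)$ the element $\pi(a)\in J(R)$ is nilpotent, so a suitable power of $a$ lies in the nil ideal $xS$ and hence $a$ is nilpotent.

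The main work lies in condition (1), and this is where I expect the only real obstacle. The crucial claim is that when $R$ is abelian the idempotents of $S$ are precisely the constant idempotents coming from $R$. Given an idempotent $e\in S$, its image $e_0=\pi(e)$ is an idempotent of $R$; since $R$ is abelian and $x$ is central, $e_0$ is central in $S$, and $e-e_0\in xS$ is nilpotent. The commuting idempotents $e,e_0$ satisfy $(e-e_0)^3=e-e_0$, which together with the nilpotence of $e-e_0$ forces $e=e_0$. Thus if every idempotent of $R$ is a projection, then $R$ is abelian by Lemma~\ref{21}, every idempotent of $S$ equals a projection $e_0\in R$, and $e_0$ remains a projection in $S$ because the involution restricts to that of $R$; this yields condition (1) for $S$. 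The reverse implication is immediate, since each idempotent of $R$ sits in $S$ as a constant and the involutions agree, so projections in $S$ restrict to projections in $R$.

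Combining the three equivalences with Lemma~\ref{lem28} completes the proof in both directions. The delicate ingredient is the rigidity of idempotents along the nil ring extension $R\hookrightarrow S$, encapsulated in the identity $(e-e_0)^3=e-e_0$ that pins every idempotent of $S$ down to a constant; the remaining verifications are routine radical computations.
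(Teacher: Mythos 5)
Your proof is correct, but it takes a genuinely different route from the paper's. The paper treats the direction ``$R[x]/(x^n)$ strongly nil $*$-clean $\Rightarrow$ $R$ strongly nil $*$-clean'' as obvious and proves the converse constructively: it identifies $N\bigl(R[x]/(x^n)\bigr)$ as the set of elements whose constant term lies in $N(R)$ (invoking Proposition~\ref{prop210}), then decomposes an arbitrary element by splitting only its constant term $a_0=e+w$ into projection plus nilpotent and absorbing $w$ together with all higher-degree terms into the nilpotent part, $e$ being central since $R$ is abelian. You instead route everything through the characterization of Lemma~\ref{lem28}, transferring each of the three conditions across the augmentation map $\pi\colon S\to R$ with nilpotent kernel $xS$: the radical computations $J(S)=J(R)+xS$ and $S/J(S)\cong R/J(R)$ are standard and correct, and your key extra ingredient --- that every idempotent of $S$ is a constant idempotent of $R$, forced by combining nilpotence of $e-e_0$ with the identity $(e-e_0)^3=e-e_0$ for commuting idempotents --- is exactly right (amusingly, the same cubing trick appears in the paper, but in the proof of Proposition~\ref{thm31}, not here). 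The paper's argument is shorter and gives the explicit nil-clean decomposition in $R[x]/(x^n)$; yours handles both directions symmetrically and yields extra structural information (the idempotent rigidity along the nil extension and the description of $J(S)$) that the paper's proof never needs to make explicit.
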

\begin{proof} One direction is obvious. Conversely, assume that
$R$ is strongly nil $*$-clean. Clearly, $N\big(R[x]/(x^n)\big)=\{
a_0+a_1x+\cdots +a_{n-1}x^{n-1}+(x^n)~|~a_0\in N(R), a_1,\cdots
,a_{n-1}\in R\}$. In view of Proposition~\ref{prop210}, $N\big(R[x]/(x^n)\big)=\{
a_0+a_1x+\cdots +a_{n-1}x^{n-1}+(x^n)~|~1-a_0\in U(R), a_1,\cdots
,a_{n-1}\in R\}$. Also note that $R$ is abelian. Thus, it can be easily seen that every element in $R[x]/(x^n)$ can be written as the sum of a projection and a nilpotent element that commute.\end{proof}

\section{Algebraic Extensions}

Let $R$ be a commutative $*$-ring, and let $\mu, \eta \in R$ with $\mu^*=\mu$ and $\eta^*=\eta$. Let $R[i]=\{ a+bi~|~a,b\in R,$ $i^2=\mu
i+\eta~\}$. Then $R[i]$ is a $*$-ring, where the involution is $*: R[i]\to R[i]$, $a+bi\mapsto a^*+b^*i$. The aim of this section is to
explore the algebraic extensions of a
strongly nil $*$-clean ring.

\begin{prop}\label{thm31} Let $R$ be a commutative $*$-ring with $\mu^*=\mu$, $\eta^*=\eta\in R$. Then $R[i]$ is strongly nil $*$-clean if and only if
\begin{itemize}
\item[{\rm(1)}] $R$ is strongly nil $*$-clean;
\item[{\rm(2)}] $\mu\eta$ is nilpotent.
\end{itemize}
\end{prop}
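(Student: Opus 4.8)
The plan is to verify, in both directions, the three conditions of Theorem~\ref{thm23} (every idempotent a projection, $N$ an ideal, the quotient by $N$ Boolean), working over the relevant Boolean quotient and using that $R[i]$ is commutative. For the forward direction, suppose $R[i]$ is strongly nil $*$-clean. I would first show $R$ inherits the property: $R$ sits in $R[i]$ as a $*$-subring with $N(R)=N(R[i])\cap R$, so every idempotent of $R$ is an idempotent of $R[i]$ hence a projection, $N(R)=N(R[i])\cap R$ is an ideal of $R$, and for $a\in R$ one has $a-a^2\in N(R[i])\cap R=N(R)$, whence $R/N(R)$ is Boolean; Theorem~\ref{thm23} then gives that $R$ is strongly nil $*$-clean. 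To get condition~(2), write $\bar{\ }$ for images in the Boolean ring $R[i]/N(R[i])$: there $\bar i=\bar i^{\,2}=\bar\mu\bar i+\bar\eta$, and multiplying this identity by $\bar\mu$ and using $\bar\mu^{2}=\bar\mu$ yields $\bar\mu\bar\eta=0$. Hence $\mu\eta\in N(R[i])\cap R=N(R)$ is nilpotent.

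For the converse, assume $R$ is strongly nil $*$-clean and $\mu\eta$ is nilpotent. Then $N(R)$ is an ideal, $B:=R/N(R)$ is Boolean, every idempotent of $R$ is a projection, and the involution induced on $B$ is the identity (each element of $B$ is the image of a unique idempotent $q\in R$, and $q^{*}=q$). Since $R$ is commutative, $I:=N(R)+N(R)i$ is a nil ideal of $R[i]$ with $R[i]/I\cong B[\bar i]$, where $\bar i^{\,2}=\bar\mu\bar i+\bar\eta$ and, because $\mu\eta$ is nilpotent, $\bar\mu\bar\eta=0$.

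The core step is to analyze $B[\bar i]$. As $\bar\mu,\bar\eta$ are orthogonal idempotents of the Boolean ring $B$, the elements $\bar\mu,\ \bar\eta,\ 1+\bar\mu+\bar\eta$ form a complete set of orthogonal idempotents, so $B[\bar i]$ splits as a product of three rings in which $\bar i$ satisfies, respectively, $\bar i^{\,2}=\bar i$, $\bar i^{\,2}=1$, and $\bar i^{\,2}=0$. The first factor is itself Boolean; in the second, $\bar i+1$ is square-zero (in characteristic $2$) and the nilradical is the principal ideal it generates, with Boolean quotient; in the third the nilradical is the ideal generated by $\bar i$, again with Boolean quotient. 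Thus $N(B[\bar i])$ is an ideal and $B[\bar i]/N(B[\bar i])$ is Boolean, and pulling these facts back along the nil ideal $I$ shows that $N(R[i])$ is an ideal of $R[i]$ with $R[i]/N(R[i])$ Boolean, giving conditions~(2) and~(3).

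Finally, for condition~(1) I would observe that the involution on $R[i]/N(R[i])$ is the identity, since this quotient is generated by $\bar i$ and the image of $R$, both of which are fixed (the latter because the involution on $B$ is trivial). Consequently, for any idempotent $p\in R[i]$ one has $p-p^{*}\in N(R[i])$; as $p$ and $p^{*}$ are idempotents in a commutative ring and their difference is nilpotent, $p=p^{*}$, so $p$ is a projection. Theorem~\ref{thm23} then yields that $R[i]$ is strongly nil $*$-clean. I expect the main obstacle to be this converse core, namely identifying $N(R[i])$ and proving the quotient is Boolean: the orthogonal-idempotent splitting is what makes this work, and it is precisely the hypothesis $\mu\eta$ nilpotent (equivalently $\bar\mu\bar\eta=0$) that kills the cross term obstructing Booleanness.
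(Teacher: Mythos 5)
Your proof is correct, and your forward direction coincides with the paper's up to phrasing: both deduce that $R$ satisfies the three conditions of Theorem~\ref{thm23}, and both extract the nilpotence of $\mu\eta$ from $i-i^2\in N\big(R[i]\big)$ (the paper multiplies $\eta+(\mu-1)i$ by $\mu$ and discards $(\mu^2-\mu)i$; you perform the same multiplication inside the Boolean quotient). The converse, however, is where you genuinely diverge. The paper argues by direct computation: it first shows every idempotent $a+bi$ is a projection by decomposing $a$ and $b$ into projection-plus-nilpotent parts, concluding $(a+bi)-(a+bi)^*\in N\big(R[i]\big)$ and killing it with the identity $x^3=x$ for differences of commuting idempotents; it then verifies Booleanness of the quotient by the congruence $(a+bi)-(a+bi)^2\equiv -b\eta+b(1-\mu)i$ and $\big((a+bi)-(a+bi)^2\big)^2\equiv -b\mu\eta\equiv 0 \pmod{N\big(R[i]\big)}$, using $2\in N(R)$. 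You instead reduce modulo the nil ideal $N(R)+N(R)i$ to the ring $B[\bar i]$ over the Boolean quotient $B=R/N(R)$, and split $B[\bar i]$ by the complete orthogonal idempotents $\bar\mu$, $\bar\eta$, $1+\bar\mu+\bar\eta$ --- orthogonality being exactly where the hypothesis on $\mu\eta$ enters --- into three factors in which $\bar i^2$ equals $\bar i$, $1$, $0$ respectively, each visibly having nilradical with Boolean quotient; your projection step then comes from the induced involution on $R[i]/N\big(R[i]\big)$ being the identity, closed off by the same cube identity the paper uses. The paper's computation is shorter and entirely self-contained; your Peirce-style decomposition is more conceptual: it makes transparent why nilpotence of $\mu\eta$ is precisely the needed hypothesis, and it yields as a by-product an explicit description of $N\big(R[i]\big)$ and of the quotient $R[i]/N\big(R[i]\big)$ as a product of three Boolean rings.
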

\begin{proof} Suppose that $R[i]$ is strongly nil $*$-clean. Then every idempotent in $R$ is a projection.
Since $R$ is commutative, $N(R)$ forms an ideal. For any $a\in R$, we see
that $a-a^2\in N\big(R[i]\big)$, and so $a-a^2\in N(R)$. Thus, $R/N(R)$ is
Boolean. Therefore $R$ is strongly nil $*$-clean by
Theorem~\ref{thm23}. As $R[i]/N\big(R[i]\big)$ is Boolean,
$i-i^2\in N\big(R[i]\big)$. This shows that $\eta+(\mu-1)i\in
N\big(R[i]\big)$, and so $\mu\eta+(\mu^2-\mu)i\in
N\big(R[i]\big)$. As $\mu^2-\mu\in N(R)$, we see that $\mu\eta\in
N\big(R[i]\big)$. Thus, $\mu\eta$ is nilpotent.

Conversely, assume that $(1)$, $(2)$ hold. As $R$ is commutative,
$N\big(R[i]\big)$ forms an ideal of $R[i]$. Let $a+bi\in R[i]$ be
an idempotent. Then we can find projections $e,f\in R$ and
nilpotent elements $u,v\in R$ such that $a=e+u$, $b=f+w$. Then
$a-a^*, b-b^*\in N(R)$. This shows that
$(a+bi)-(a+bi)^*=(a-a^*)+(b-b^*)i\in N\big(R[i]\big)$. As $a+bi, (a+bi)^*\in R[i]$ are idempotents, we see that
$\big((a+bi)-(a+bi)^*\big)^3=\big((a+bi)-2(a+bi)(a+bi)^*+(a+bi)^*\big)\big((a+bi)-(a+bi)^*\big)=(a+bi)-(a+bi)^*$. Hence,
$\big((a+bi)-(a+bi)^*\big)\big(1-((a+bi)-(a+bi)^*)^2\big)=0$, therefore
$(a+bi)-(a+bi)^*=0$. That is, $a+bi\in R[i]$ is a projection.

Since $R$ is strongly nil $*$-clean, it follows from Theorem
\ref{thm23} that $2-2^2\in N(R)$, and so $2\in N(R)$. For any $a+bi\in
R[i]$, it is easy to verify that
$$\begin{array}{lcl}
(a+bi)-(a+bi)^2&=&(a-a^2)-2abi+bi-b^2i^2\\
&\equiv &-b^2\eta+(b-b^2\mu)i\\
&\equiv &-b\eta+b(1-\mu)i~\big(mod~N\big(R[i]\big)\big).
\end{array}$$ This shows that $$\begin{array}{lcl}
\big((a+bi)-(a+bi)^2\big)^2&\equiv &b^2\eta^2-2b^2\eta(1-\mu)i+b^2(1-\mu)^2i^2\\
&\equiv &b\eta^2+b(1-\mu)^2(\mu i+\eta)\\
&\equiv &b\eta+b(1-\mu)(\mu i+\eta)\\
&\equiv &2b\eta-b\mu\eta+b(\mu-\mu^2)i\\
&\equiv &-b\mu\eta\\
&\equiv &0~\big(mod~N\big(R[i]\big)\big).\end{array}$$ Hence,
$(a+bi)-(a+bi)^2\in N(R[i])$. That is,
$R[i]/N\big(R[i]\big)$ is Boolean. According to
Theorem~\ref{thm23}, we complete the proof.\end{proof}

As an immediate consequence, we deduce that a commutative $*$-ring
$R$ is strongly nil $*$-clean if and only if so is $R[i]$ where
$i^2=-1$.

\bigskip

We now consider a subclass of strongly nil $*$-clean rings consisting of rings which we call $*$-Boolean-like. First recall that a ring $R$ is called {\em Boolean-like} \cite{F} if it is commutative with unit and is of characteristic 2 with $ab(1+a)(1+b)=0$ for every $a,b\in R$. Any Boolean ring is clearly a Boolean-like ring but not conversely (see \cite{F}). Any Boolean-like ring is uniquely nil clean by \cite[Theorem 17]{F}. Also, $R$ is Boolean-like if and only if (1) $R$ is commutative ring with unit; (2) It is of characteristic $2$; (3) It is nil clean; (4) $ab=0$ for every nilpotent element $a,b$ in $R$ \cite[Theorem 19]{F}.

\begin{df} {\rm A $*$-ring
$R$ is said to be {\it $*$-Boolean-like} provided that every
idempotent in $R$ is a projection and $(a-a^2)(b-b^2)=0$ for all
$a,b\in R$.}
\end{df}

By using the following theorem, we can see that $*$-Boolean-like rings are commutative rings. For, let $x,y\in R$. In view of Theorem~\ref{thm38}, $x-e$ and $y-f$ are nilpotent for some projections $e,f\in R$. Again by Theorem~\ref{thm38}, $(x-e)(y-f)=0=(y-f)(x-e)$. Since $R$ is abelian, it follows that $xy=yx$. Hence $R$ is commutative.\\

The following is an example of a $*$-Boolean-like ring.

\begin{ex} {\rm Let $R=\{
\left(
\begin{array}{cc}
a&b\\
c&a
\end{array}
\right)
~|~a,b,c\in \mathbb{Z}_2\}$. Define
$
\left(
\begin{array}{cc}
a&b\\
c&a
\end{array}
\right)+\left(
\begin{array}{cc}
a'&b'\\
c'&a'
\end{array}
\right)=\left(
\begin{array}{cc}
a+a'&b+b'\\
c+c'&a+a'
\end{array}
\right),$ $
\left(
\begin{array}{cc}
a&b\\
c&a
\end{array}
\right)\left(
\begin{array}{cc}
a'&b'\\
c'&a'
\end{array}
\right)=\left(
\begin{array}{cc}
aa'&ab'+ba'\\
ca'+ac'&aa'
\end{array}
\right)$ and $
*: R\to R,$ $\left(
\begin{array}{cc}
a&b\\
c&a
\end{array}
\right)\mapsto \left(
\begin{array}{cc}
a&c\\
b&a
\end{array}
\right).
$ Then $R$ is a $*$-ring. Let $\left(
\begin{array}{cc}
a&b\\
c&a
\end{array}
\right)\in R$ be an idempotent. Then $a=a^2$ and $(2a-1)b=(2a-1)c=0$. As $(2a-1)^2=1$, we see that $b=c=0$, and so the set of all
idempotents in $R$ is $\{ \left(
\begin{array}{cc}
0&0\\
0&0
\end{array}
\right), \left(
\begin{array}{cc}
1&0\\
0&1
\end{array}
\right)\}$. Thus, every idempotent in $R$ is a projection. For any
$A,B\in R$, we see that $(A-A^2)(B-B^2)=\left(
\begin{array}{cc}
0&*\\
*&0
\end{array}
\right)\left(
\begin{array}{cc}
0&*\\
*&0
\end{array}
\right)=0$. Therefore $R$ is $*$-Boolean-like.}
\end{ex}

\begin{thm}\label{thm38} Let $R$ be a $*$-ring. Then $R$ is $*$-Boolean-like if and only if
\begin{itemize}
\item[{\rm(1)}] $R$ is strongly nil $*$-clean;
\item[{\rm(2)}] $\alpha\beta=0$ for all nilpotent elements $\alpha,\beta\in R$.
\end{itemize}
\end{thm}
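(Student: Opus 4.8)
The plan is to prove the two implications separately, the converse being immediate and the forward direction carrying the real content.

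For the converse, suppose (1) and (2) hold. Since $R$ is strongly nil $*$-clean, Theorem~\ref{thm23} tells us at once that every idempotent in $R$ is a projection, which is the first requirement in the definition of $*$-Boolean-like. For the second requirement, Theorem~\ref{thm23} also gives that $R/N(R)$ is Boolean, so for any $a,b\in R$ both $a-a^2$ and $b-b^2$ lie in $N(R)$; being nilpotent, their product vanishes by (2), i.e. $(a-a^2)(b-b^2)=0$. Hence $R$ is $*$-Boolean-like.

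For the forward direction, assume $R$ is $*$-Boolean-like. First I would extract the numerical consequence of the defining identity by setting $b=a$: this yields $(a-a^2)^2=0$, so in particular $a-a^2\in N(R)$ for every $a$. The key observation --- and the step I expect to drive the whole argument --- is that every nilpotent squares to zero. Indeed, if $\alpha\in N(R)$ then $(\alpha-\alpha^2)^2=\alpha^2(1-\alpha)^2=0$, and cancelling the unit $(1-\alpha)^2$ forces $\alpha^2=0$. This immediately delivers condition (2): for nilpotents $\alpha,\beta$ we have $\alpha-\alpha^2=\alpha$ and $\beta-\beta^2=\beta$, so the hypothesis applied with $a=\alpha$, $b=\beta$ reads $\alpha\beta=(\alpha-\alpha^2)(\beta-\beta^2)=0$.

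It remains to prove (1). Rather than verify the hypotheses of Theorem~\ref{thm23} (which would require showing $N(R)$ is an ideal), I would lift each $a$ to an idempotent directly, exploiting $(a-a^2)^2=0$. Putting $w=a-a^2$ (so $w^2=0$) and $e=3a^2-2a^3$, a short computation inside the commutative subring $\mathbb{Z}[a]$ shows $e^2=e$ and $a-e=(1-2a)w$, which is square-zero and hence nilpotent. Since $e\in\mathbb{Z}[a]$ it commutes with $a-e$, and since every idempotent in $R$ is a projection, $e$ is a projection; thus $a=e+(a-e)$ exhibits the required decomposition and $R$ is strongly nil $*$-clean. The only genuinely computational point is this lifting identity; note moreover that no global commutativity of $R$ is invoked, since the square-zero argument, the lifting, and the product identity each take place inside a commutative subring generated by the elements at hand.
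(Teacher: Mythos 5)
Your proof is correct, and the forward direction follows a genuinely different route from the paper's. The paper proves (1) by checking the hypotheses of Theorem~\ref{thm23}: from $(a-a^2)^2=0$ it deduces $a^2=a^3f(a)$, hence that $R$ is strongly $\pi$-regular, then invokes Badawi's theorem \cite{Ba} (applicable since $R$ is abelian by Lemma~\ref{21}) to conclude that $N(R)$ is an ideal with $R/N(R)$ Boolean; it then gets (2) by iterating $\alpha^2=\alpha^3g(\alpha)$ against nilpotency to force $\alpha^2=0$. You bypass all of this machinery: your unit-cancellation argument $(\alpha-\alpha^2)^2=\alpha^2(1-\alpha)^2=0$ yields $\alpha^2=0$ more cleanly than the paper's iteration, and your explicit lifting is a valid computation --- with $w=a-a^2$ and $e=3a^2-2a^3$ one has $a-e=(1-2a)w$, $(a-e)^2=(1-2a)^2w^2=0$, and, using the relation $a^4=2a^3-a^2$, indeed $e^2=9a^4-12a^5+4a^6=3a^2-2a^3=e$; since $e$ and $a-e$ commute as polynomials in $a$ and the $*$-Boolean-like definition makes the idempotent $e$ a projection, $R$ is strongly nil $*$-clean directly. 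What your route buys is a self-contained, purely computational proof that needs no external results for this direction (no $\pi$-regularity, no theorem that $N(R)$ is an ideal, not even Theorem~\ref{thm23}); what the paper's route buys is the structural placement of $*$-Boolean-like rings inside the framework of Theorem~\ref{thm23}, establishing along the way that $N(R)$ is an ideal and $R/N(R)$ is Boolean, which matches the narrative of the rest of the paper. The converse direction in your proposal coincides with the paper's.
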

\begin{proof} Suppose that $R$ is $*$-Boolean-like. Then every idempotent in
$R$ is a projection; hence, $R$ is abelian. For any $a\in R$,
$(a-a^2)^2=0$, and so $a^2=a^3f(a)$ for some $f(t)\in {\Bbb
Z}[t]$. This implies that $R$ is strongly $\pi$-regular, and so it
is $\pi$-regular. It follows from \cite[Theorem 3]{Ba} that $N(R)$
forms an ideal. Further, $a-a^2\in N(R)$. Therefore $R/N(R)$ is Boolean.
According to Theorem~\ref{thm23}, $R$ is strongly nil $*$-clean.
For any nilpotent elements $\alpha,\beta\in R$, we can find some
$m,n\in {\Bbb N}$ such that $\alpha^m=\beta^n=0$. Since
$\alpha^2=\alpha^3g(\alpha)$ for some $g(t)\in {\Bbb Z}[t]$, $\alpha^2=0$. Likewise, $\beta^2=0$. This shows that
$\alpha\beta=(\alpha-\alpha^2)(\beta-\beta^2)=0$.

Conversely, assume that $(1)$ and $(2)$ hold.
By Theorem~\ref{thm23}, every idempotent is a projection, and for any $a\in R$, $a-a^2$ is nilpotent. Hence
for any $a,b\in R$, $(a-a^2)(b-b^2)=0$. Therefore $R$ is $*$-Boolean-like.
\end{proof}

\begin{cor}\label{cor39} Let $R$ be a commutative $*$-ring with $\mu^*=\mu, \eta^*=\eta\in R$. If $\mu\in U(R)$, then
$R[i]$ is $*$-Boolean-like if and only if
\begin{itemize}
\item[{\rm(1)}] $R$ is $*$-Boolean-like;
\item[{\rm(2)}] $\eta$ is nilpotent.
\end{itemize}
\end{cor}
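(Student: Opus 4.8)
The plan is to reduce everything to Theorem~\ref{thm38} and Proposition~\ref{thm31}. By Theorem~\ref{thm38} applied to $R[i]$ (and separately to $R$), being $*$-Boolean-like is equivalent to the conjunction of (a) strongly nil $*$-cleanness and (b) the condition that the product of any two nilpotent elements vanishes. So I would split the proof into handling these two pieces. For piece (a), Proposition~\ref{thm31} already gives that $R[i]$ is strongly nil $*$-clean if and only if $R$ is strongly nil $*$-clean and $\mu\eta$ is nilpotent; and since $\mu\in U(R)$ and $R$ is commutative, $\mu\eta$ is nilpotent if and only if $\eta$ is nilpotent (if $(\mu\eta)^k=\mu^k\eta^k=0$ then $\eta^k=0$ as $\mu^k$ is a unit, and conversely). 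This simultaneously disposes of the strongly-nil-$*$-clean requirement and yields condition (2) of the corollary.

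The key step is to describe the nilpotent elements of $R[i]$. Assuming $R$ is strongly nil $*$-clean, Theorem~\ref{thm23} tells us that $N(R)$ is an ideal and $R/N(R)$ is Boolean. When $\eta$ is nilpotent and $\mu\in U(R)$, I would reduce $R[i]$ modulo the ideal $N(R)[i]=\{a+bi \mid a,b\in N(R)\}$. Passing to $R/N(R)$ sends $\eta$ to $0$ and $\mu$ to a unit of a Boolean ring, which must equal $1$; hence $R[i]/N(R)[i]\cong (R/N(R))[x]/(x^2-x)\cong (R/N(R))\times(R/N(R))$, a Boolean and therefore reduced ring. Consequently every nilpotent of $R[i]$ lies in $N(R)[i]$, i.e. has both coordinates in $N(R)$. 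This reduction, and in particular the point that $\mu\in U(R)$ forces $\bar\mu=1$ so that the quotient becomes reduced, is the main obstacle: without $\mu$ a unit the coefficient of $i$ would not be forced to be nilpotent, and the characterization of $N(R[i])$ would fail.

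With this in hand the two remaining verifications are routine. For the forward direction I would take two nilpotents $a_1+b_1i$ and $a_2+b_2i$ of $R[i]$, expand the product using $i^2=\mu i+\eta$ and the commutativity of $R[i]$, and observe that each resulting term carries a factor that is a product of two nilpotents of $R$ (namely $a_1a_2$, $a_1b_2$, $b_1a_2$, $b_1b_2$); these vanish by condition (b) for $R$, so the product is $0$ and $R[i]$ satisfies (b). For the converse, the inclusion $R\hookrightarrow R[i]$ shows that nilpotents of $R$ remain nilpotent in $R[i]$, so condition (b) for $R[i]$ descends to condition (b) for $R$; combined with the strongly-nil-$*$-clean part this gives that $R$ is $*$-Boolean-like. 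Assembling these pieces through Theorem~\ref{thm38} completes both directions.
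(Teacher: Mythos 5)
Your proposal is correct, and its outer skeleton agrees with the paper's: both directions are funneled through Theorem~\ref{thm38} and Proposition~\ref{thm31}, together with the observation that $\mu\eta$ is nilpotent if and only if $\eta$ is nilpotent once $\mu\in U(R)$. The genuine difference lies in the key lemma that a nilpotent $a+bi\in R[i]$ must have $a,b\in N(R)$. The paper proves this computationally: since $2\in N(R)$, it expands $(a+bi)^{2n}$ Frobenius-style and shows by induction that $i^{2n}=ci+d$ with $c\in U(R)$ and $d\in N(R)$ (this is precisely where $\mu\in U(R)$ enters), then cancels the unit $c$ to conclude $a^{2n}=b^{2n}=0$. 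You argue structurally: modulo the nil ideal $N(R)[i]$ the ring becomes $(R/N(R))[x]/(x^2-\bar\mu x-\bar\eta)$, and since $R/N(R)$ is Boolean by Theorem~\ref{thm23}, its only unit is $1$, so $\bar\mu=1$ and $\bar\eta=0$; hence the quotient is $(R/N(R))\times(R/N(R))$, a Boolean and therefore reduced ring, which forces $N(R[i])\subseteq N(R)[i]$. Your route buys two things: it makes transparent exactly why $\mu\in U(R)$ is needed (a unit of a Boolean ring must equal $1$), and it sidesteps a delicate point in the paper's computation, where the displayed identity $(a+bi)^{2n}=a^{2n}+b^{2n}i^{2n}$ is strictly speaking only a congruence modulo $2R[i]$ and only for exponents that are powers of $2$, so the paper's argument needs minor patching while yours does not. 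What the paper's approach buys in exchange is that it is entirely elementary, using nothing beyond binomial expansion and induction, with no appeal to quotient constructions or the Chinese Remainder Theorem. (A cosmetic remark: your labels ``forward'' and ``converse'' are interchanged relative to the order in the statement, but each direction is argued correctly and completely.)
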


\begin{proof} If $R[i]$ is $*$-Boolean-like, then $R$ is $*$-Boolean-like. Also $\mu\eta\in R$ is nilpotent by
Proposition~\ref{thm31} and Theorem~\ref{thm38}. Since $\mu$ is unit and $N(R)$ is an ideal, $\eta$ is nilpotent.

Conversely, assume that $(1)$ and $(2)$ hold. Then $R[i]$ is strongly
nil-$*$-clean by Proposition~\ref{thm31}. In addition, $2\in N(R)$.
Let $a+bi\in R[i]$ be nilpotent. We claim that $a$ and $b$ are nilpotent. As $2\in N(R)$, we can find some
$n\in {\Bbb N}$ such that $(a+bi)^{2n}=a^{2n}+b^{2n}i^{2n}=0$. We
claim that $i^{2n}=ci+d$ for some $c\in U(R), d\in N(R)$. This is
true for $n=1$ by hypothesis. Assume that this holds for
$n=k (k\geq 1)$. Write $i^{2k}=\alpha i+\beta$ with $\alpha\in
U(R),\beta\in N(R)$. Now assume that $n=k+1$. Then
$i^{2(k+1)}=i^{2k}(\mu
i+\eta)=(\alpha\mu^2+\alpha\eta+\beta\mu)i+(\alpha\mu\eta+\beta\eta)$
with $\alpha\mu^2+\alpha\eta+\beta\mu\in
U(R),\alpha\mu\eta+\beta\eta\in N(R)$. Therefore
$(a+bi)^{2n}=a^{2n}+b^{2n}i^{2n}=a^{2n}+b^{2n}(ci+d)=(a^{2n}+b^{2n}d)+b^{2n}ci$.
This implies that $a^{2n}+b^{2n}d=0=b^{2n}c$. As $c\in U(R)$, we
get $b^{2n}=a^{2n}=0$. That is, $a,b\in R$ are nilpotent. Now let
$\alpha+\beta i\in R[i]$ be an another nilpotent element. Similarly, $\alpha,\beta\in
R$ are nilpotent. Thus, $(a+bi)(\alpha+\beta
i)=(a\alpha-b\beta)+(a\beta+b\beta)i=0$. By
Theorem~\ref{thm38}, $R[i]$ is $*$-Boolean-like.\end{proof}


\begin{ex} {\rm Let $R$ be the ring $${\tiny \{ \left(
\begin{array}{cc}
0&0\\
0&0
\end{array}
\right), \left(
\begin{array}{cc}
1&0\\
0&1
\end{array}
\right), \left(
\begin{array}{cc}
0&1\\
1&0
\end{array}
\right), \left(
\begin{array}{cc}
1&1\\
0&0
\end{array}
\right), \left(
\begin{array}{cc}
0&0\\
1&1
\end{array}
\right), \left(
\begin{array}{cc}
1&0\\
1&0
\end{array}
\right),\\
 \left(
\begin{array}{cc}
0&1\\
0&1
\end{array}
\right), \left(
\begin{array}{cc}
1&1\\
1&1
\end{array}
\right)\},}$$ where $0,1\in {\Bbb Z}_2.$ Define $*: R\to R,
A\mapsto A^T$, the transpose of $A$. Then $R$ is a $*$-ring in
which $(a-a^2)(b-b^2)=0$ for all $a,b\in R$. Further,
$\alpha\beta=0$ for all nilpotent elements $\alpha,\beta\in R$. But $R$ is
not $*$-Boolean-like. }
\end{ex}

We end this section with an example showing that strongly nil clean ring need not be strongly nil $*$-clean.

\begin{ex} \rm Consider the
ring
$$R=\big \{ \left(
\begin{array}{cc}
a&2b\\
0&c
\end{array}
\right) ~|~a,b,c\in {\Bbb Z}_4\big \}.$$ Then for any $x,y\in R$,
$(x-x^2)(y-y^2)=0$. Obviously, $R$ is not commutative. This implies that $R$ is
not a $*$-Boolean-like ring for any involution $*$. Accordingly, $R$ is not strongly nil $*$-clean for any involution $*$;
otherwise, every idempotent in $R$ is a projection, a
contradiction (see Lemma~\ref{21}).
We can also consider the involution $*: R\rightarrow R$, $\tiny \left(
\begin{array}{cc}
a&2b\\
0&c
\end{array}
\right)\mapsto
\left(
\begin{array}{cc}
c& -2b\\
0&a
\end{array}
\right)$ and the idempotent $\tiny \left(
\begin{array}{cc}
0&0\\
0&1
\end{array}
\right)$ which is not a projection.
On the other hand, since $(x-x^2)^2=0$ and so $x-x^2 \in N(R)$ for all $x\in R$, we get that $R$ is strongly nil clean by \cite[Theorem 3]{HTY}.
\end{ex}

\section{Submaximal Ideals and $*$-Boolean Rings}
An ideal $I$ of a ring $R$ is called a {\em submaximal ideal} if $I$ is
covered by a maximal ideal of $R$. That is, there exists a maximal
ideal $J$ of $R$ such that $I\subsetneqq J\subsetneqq R$ and for
any ideal $K$ of $R$ such that $I\subseteq K\subseteq J$ then we
have $I=K$ or $K=J$. This concept was firstly introduced to
study Boolean-like rings (cf. ~\cite{S}).

A $*$-ring $R$ is called a {\em $*$-Boolean ring} if every element of $R$ is a projection.

The purpose of this section is to characterize submaximal ideals of strongly nil $*$-clean rings, and $*$-Boolean rings by means of strongly nil $*$-cleanness. We begin with the following lemma.

\begin{lem}\label{lem41} Let $R$ be strongly nil $*$-clean. Then an ideal $M$ of $R$ is maximal if and only
if
\begin{itemize}
\item[{\rm(1)}] $M$ is prime;
\item[{\rm(2)}] For any $a\in R, n\geq 1$, $a^n\in M$ implies that $a\in M$.
\end{itemize}
\end{lem}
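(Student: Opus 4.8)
The plan is to prove both directions, with the forward direction (maximal $\Rightarrow$ prime and radical) being essentially formal, and the converse (prime $+$ radical $\Rightarrow$ maximal) being where the strongly nil $*$-clean hypothesis does real work. First I would observe that since $R$ is strongly nil $*$-clean, by Theorem~\ref{thm23} every idempotent is central and $R/N(R)$ is Boolean with $N(R)$ an ideal. For the forward direction: if $M$ is maximal, then $R/M$ is a simple ring, hence has no nonzero proper ideals; since a maximal ideal is automatically prime in the presence of suitable structure (and here $R/M$ will be a field, as I explain below), condition (1) follows. For condition (2), if $a^n \in M$ but $a \notin M$, I would pass to the quotient $\bar R = R/M$: the image $\bar a$ satisfies $\bar a^n = 0$, so $\bar a \in N(\bar R)$. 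But $\bar R$ inherits strong nil $*$-cleanness (quotients of strongly nil $*$-clean rings by $*$-closed ideals remain strongly nil $*$-clean, and $M$ maximal forces $\bar R$ simple), so $\bar R / N(\bar R)$ being Boolean together with $\bar R$ simple should force $\bar R \cong \mathbb{Z}_2$, in which the only nilpotent is $0$; hence $\bar a = 0$, i.e. $a \in M$, a contradiction.

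For the converse, suppose $M$ is prime and radical (condition (2)). I would pass to $\bar R = R/M$, which is a strongly nil $*$-clean ring that is moreover a domain (since $M$ is prime) and reduced (since $M$ is radical, $\bar R$ has no nonzero nilpotents). The key point is that in $\bar R$, reducedness plus $a - a^2 \in N(\bar R) = 0$ for every $a$ (which holds because $R/N(R)$ is Boolean and $N(\bar R)=0$) gives $\bar a = \bar a^2$ for all $\bar a \in \bar R$; that is, $\bar R$ is Boolean. A Boolean domain is $\mathbb{Z}_2$, so $\bar R \cong \mathbb{Z}_2$ is a field, whence $M$ is maximal.

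The main obstacle I anticipate is verifying cleanly that the relevant quotients remain strongly nil $*$-clean and, in particular, reduced when $M$ is radical. The radical condition (2) is exactly what forces $N(\bar R) = 0$: any nilpotent $\bar a$ lifts to $a$ with $a^n \in M$, so $a \in M$ and $\bar a = 0$. Once $\bar R$ is reduced, the identity $a - a^2 \in N(R)$ (valid in any strongly nil $*$-clean ring by Theorem~\ref{thm23}, since $R/N(R)$ is Boolean) descends to $\bar a - \bar a^2 = 0$, collapsing $\bar R$ to a Boolean ring without needing to re-derive strong nil $*$-cleanness of the quotient from scratch. So the cleanest route avoids claiming the quotient is strongly nil $*$-clean and instead works directly with the two structural facts $N(R)$ is an ideal and $R/N(R)$ is Boolean, which is all Theorem~\ref{thm23} provides and all the argument requires.
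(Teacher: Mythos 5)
Your proposal is correct in substance, but it takes a genuinely different route from the paper's proof. The paper argues element-wise in $R$ itself: for the forward direction it invokes exchange-ring machinery ($R$ is an abelian clean ring, hence exchange, by \cite[Theorem 17.2.2]{Ch}, and then \cite[Proposition 17.1.9]{Ch}) to produce an idempotent $e+M\in \overline{a}\overline{R}$ with $1-e\in M$, hence $1-ar\in M$ for some $r$, and then descends the powers one at a time ($a^{n}\in M \Rightarrow a^{n-1}\in M\Rightarrow \cdots \Rightarrow a\in M$); for the converse it picks a maximal ideal $I$ strictly between $M$ and $R$, chooses $a\in I\setminus M$, splits $a=e+u$ with $e$ idempotent and $u$ nilpotent, uses (2) to force $u\in M$, uses primeness together with $eR(1-e)=0$ to force $1-e\in M$, and reaches the contradiction $1=(1-a)+a\in I$. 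You instead identify the quotient in both directions: you show $R/M\cong\mathbb{Z}_2$ (forward: simplicity plus the Boolean-modulo-nilpotents structure; converse: condition (2) makes $R/M$ reduced, $a-a^2\in N(R)$ descends to $\overline{a}=\overline{a}^2$, and a Boolean prime ring is $\mathbb{Z}_2$). Your route is more structural, avoids the exchange-ring citations entirely, and replaces the power-descent iteration by a one-line reduction; the paper's converse is more elementary in that it never needs to identify the quotient ring.

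Two of your justifications need repair, though both are easily fixed. First, in the forward direction you pass strong nil $*$-cleanness to $R/M$ via ``quotients by $*$-closed ideals,'' but a maximal ideal $M$ need not satisfy $M^*=M$, so the involution need not descend at all. You do not need it to: strong nil cleanness (the involution-free notion) passes to every quotient, and then \cite[Theorem 3]{HTY} gives that $N(R/M)$ is an ideal with Boolean quotient, so simplicity forces $N(R/M)=0$ and $R/M$ is Boolean, hence $\mathbb{Z}_2$. (Alternatively and even more directly: $N(R)$ is a nil ideal by Theorem~\ref{thm23}, so $N(R)\subseteq J(R)\subseteq M$, and $R/M$ is then a quotient of the Boolean ring $R/N(R)$.) Second, in the converse, ``$M$ prime $\Rightarrow R/M$ is a domain'' is false in general for noncommutative rings; $R/M$ is only a prime ring. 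Your own closing paragraph contains the fix, provided you reorder the steps: first deduce $\overline{a}=\overline{a}^2$ for all $\overline{a}$ (reducedness from (2) plus $a-a^2\in N(R)$), so $R/M$ is Boolean and hence commutative; then primeness (applied as $\overline{e}\,\overline{R}\,(\overline{1}-\overline{e})=0$) forces every element to be $0$ or $1$, giving $R/M\cong\mathbb{Z}_2$ and maximality of $M$.
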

\begin{proof} Suppose that $M$ is maximal. Obviously, $M$ is prime. Let $a\in R$ and $a^n\in M$. If
$a\not\in M$, $RaR+M=R$. Thus,
$\overline{R}\overline{a}\overline{R}=\overline{R}$ where $\overline{R}= R/M$ and $\overline{a}=a+M$. Clearly, $R$
is an abelian clean ring, and so it is an exchange ring by \cite[Theorem 17.2.2]{Ch}.
This implies that $R/M$ is an abelian exchange ring. As in the proof of
\cite[Proposition 17.1.9]{Ch}, there exists an idempotent $e+M\in R/M$ such that
$\overline{R}(e+M)\overline{R}=\overline{R}$ and
$e+M\in \overline{a}\overline{R}$. Thus,
$1-e\in M$. Hence, $1-ar\in M$ for some $r\in R$.
This implies that $a^{n-1}-a^nr\in M$, and so $a^{n-1}\in M$. By
iteration of this process, we see that $a\in M$, as required.

Conversely, assume that $(1)$ and $(2)$ hold. Assume that $M$ is
not maximal. Then we can find a maximal ideal $I$ of $R$ such that
$M\subsetneqq I\subsetneqq R$. Choose $a\in I$ while $a\not\in M$.
By hypothesis, there exist an idempotent $e\in R$ and a nilpotent
$u\in R$ such that $a=e+u$. Write $u^m=0$. Then $u^m\in M$. By
hypothesis, $u\in M$. This shows that $e\not\in M$. Clearly, $R$
is abelian. Thus $eR(1-e)\subseteq M$. As $M$ is prime, we deduce
that $1-e\in M$. As a result, $1-a=(1-e)-u\in M$, and so
$1=(1-a)+a\in I$. This gives a contradiction. Therefore $M$ is
maximal.\end{proof}

Let $R$ be a strongly nil $*$-clean ring, and let $x\in R$. Then
there exists a unique projection $e\in R$ such that $x-e\in N(R)$.
We denote $e$ by $x_P$ and $x-e$ by $x_N$.

\begin{lem}\label{lem42} Let $I$ be an ideal of a strongly nil $*$-clean ring $R$, and let $x\in R$ be such
that $x\not\in I$. If $x_P\not\in I$, then there exists a maximal
ideal $J$ of $R$ such that $I\subseteq J$ and $x\not\in J$.
\end{lem}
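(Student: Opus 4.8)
The plan is to reduce the statement to separating the projection part $x_P$ from $I$ by a suitable maximal ideal, after which the nilpotent part takes care of itself. First I would record two facts already available: by Lemma~\ref{21} the ring $R$ is abelian, so every idempotent of $R$ is central, in particular the projection $e:=x_P$ is central; and by Theorem~\ref{thm23} the set $N(R)$ is an ideal, whence $N(R)\subseteq J(R)$ and therefore the nilpotent element $x_N=x-e$ lies in every maximal ideal of $R$. These two observations turn the problem into a purely idempotent-theoretic one.

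Next I would form the ideal $K=I+R(1-e)$, which is genuinely two-sided because $e$, and hence $1-e$, is central. The crucial step is to show that $K$ is proper. If instead $K=R$, then $1=a+r(1-e)$ for some $a\in I$ and $r\in R$; multiplying on the right by $e$ and using $e(1-e)=0$ yields $e=ae\in I$, since $I$ is an ideal. This contradicts the hypothesis $x_P\notin I$. Hence $K\neq R$, and by the standard existence of maximal ideals there is a maximal ideal $J$ of $R$ with $K\subseteq J$. In particular $I\subseteq J$ and $1-e\in J$, so $e\notin J$, for otherwise $1=e+(1-e)\in J$ would make $J$ improper.

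Finally I would deduce $x\notin J$. Since $x_N$ is nilpotent, $x_N\in N(R)\subseteq J(R)\subseteq J$; combined with $e\notin J$ this forces $x\notin J$, because $x\in J$ would give $e=x-x_N\in J$. Thus $J$ is the desired maximal ideal containing $I$ and omitting $x$.

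The only delicate point is verifying that $K=I+R(1-e)$ is proper, and this is precisely where the hypothesis $x_P\notin I$ together with the centrality of $e$ (that is, the abelianness supplied by Lemma~\ref{21}) enters; it is the heart of the argument. Everything else — the passage from $x_P\notin J$ to $x\notin J$, and the containment $N(R)=J(R)\subseteq J$ — is routine once the structural facts from Theorem~\ref{thm23} are in hand.
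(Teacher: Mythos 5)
Your proof is correct, and it takes a genuinely different route from the paper's. The paper runs Zorn's Lemma on the poset $\Omega=\{K \mid I\subseteq K,\ x_P\notin K\}$, takes $J$ maximal in $\Omega$, proves that $J$ is prime (via $x_P=x_P^2\in RaRbR+J$ for $a,b\notin J$), and then upgrades primality to genuine maximality of $J$ using abelianness and $x_PR(1-x_P)=0\subseteq J$. You instead build the explicit ideal $K=I+R(1-x_P)$, two-sided because $x_P$ is central (Lemma~\ref{21}); your multiplication-by-$x_P$ trick ($1=a+r(1-x_P)$ forces $x_P=ax_P\in I$) shows $K$ is proper, so $K$ lies inside some maximal ideal $J$ by the standard Zorn argument, and the nilpotent part is absorbed by $N(R)\subseteq J(R)\subseteq J$, using that $N(R)$ is a nil ideal (Theorem~\ref{thm23}) and that $J(R)$ lies in every maximal two-sided ideal --- a standard fact (the quotient by such an ideal is simple, hence has zero radical) but one worth stating explicitly, since $J(R)$ is defined via maximal one-sided ideals. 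What your approach buys: it is shorter, avoids verifying primality of a Zorn-maximal element, and makes explicit the final step (from $x_P\notin J$ and $x_N\in J$ to $x\notin J$), which the paper leaves implicit. What the paper's approach buys: it is self-contained, relying only on the structure theorems already proved and building maximality by hand rather than importing facts about the Jacobson radical.
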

\begin{proof} Let $\Omega=\{ K~|~I\subseteq K, x_P\not\in K\}$. Then
$\Omega\neq \emptyset$. Given $K_1\subseteq K_2\subseteq \cdots $
in $\Omega$, we set $Q=\bigcup\limits_{i=1}^{\infty}K_i$. Then $Q$
is an ideal of $R$. If $Q\not\in \Omega$, then $x_P\in Q$, and so
$x_P\in K_i$ for some $i$. This gives a contradiction. Thus,
$\Omega$ is inductive. By using Zorn's Lemma, there exists an
ideal $J$ of $R$ which is maximal in $\Omega$. Let $a,b\in R$ such
that $a,b\not\in J$. By the maximality of $J$, we see that $RaR+J,
RbR+J\not\in \Omega$. This shows that $x_P\in
\big(RaR+J\big)\bigcap \big(RbR+J\big)$. Hence, $x_P=x_P^{2}\in RaRbR+J$. This yields that $aRb\not\in J$;
otherwise, $x_P\in J$, a contradiction. Hence, $J$ is prime.
Assume that $J$ is not maximal. Then we can find a maximal ideal
$M$ of $R$ such that $J\subsetneqq M\subsetneqq R$. Clearly, $R$
is abelian. By the maximality, we see that $x_P\in M$, and so
$1-x_P\not\in M$. This implies that $1-x_P\not\in J$. As
$x_PR(1-x_P)=0\subseteq J$, we have that $x_P\in J$, a
contradiction. Therefore $J$ is a maximal ideal, as
asserted.\end{proof}

\begin{prop}\label{thm43} Let $R$ be strongly nil $*$-clean. Then the intersection of two maximal ideals is submaximal and it is covered by each of these
two maximal ideals. Further, there is no other maximal ideals
containing it.
\end{prop}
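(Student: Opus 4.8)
The plan is to reduce everything to the Boolean quotient $\overline{R}=R/J(R)$ and read off a finite ideal lattice. Let $M_1,M_2$ be distinct maximal ideals of $R$ (otherwise the intersection is itself maximal and there is nothing to prove). From the proof of Lemma~\ref{lem27} we have $N(R)=J(R)$, and since $J(R)$ is contained in every maximal ideal we get $J(R)\subseteq M_1\cap M_2$. First I would pass to $\overline{R}=R/J(R)$, which is Boolean by Theorem~\ref{thm29}, and write $\overline{M_i}=M_i/J(R)$ for the corresponding distinct maximal ideals of $\overline{R}$.

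In the Boolean ring $\overline{R}$, each quotient $\overline{R}/\overline{M_i}$ is a Boolean field, hence $\cong\mathbb{Z}_2$, and two distinct maximal ideals are comaximal, so $\overline{M_1}+\overline{M_2}=\overline{R}$. The Chinese Remainder Theorem then gives $\overline{R}/(\overline{M_1}\cap\overline{M_2})\cong \overline{R}/\overline{M_1}\times \overline{R}/\overline{M_2}\cong\mathbb{Z}_2\times\mathbb{Z}_2$. Since the only ideals of $\mathbb{Z}_2\times\mathbb{Z}_2$ are $0$, the two factors, and the whole ring, the ideal lattice of $\overline{R}$ above $\overline{M_1}\cap\overline{M_2}$ consists of exactly four members: $\overline{M_1}\cap\overline{M_2}$, the two maximal ideals $\overline{M_1}$ and $\overline{M_2}$, and $\overline{R}$, with $\overline{M_1}\cap\overline{M_2}$ covered by each $\overline{M_i}$.

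Finally I would transport this back to $R$. Because $M_1\cap M_2\supseteq J(R)$, every ideal of $R$ containing $M_1\cap M_2$ contains $J(R)$, so the order-preserving bijection between ideals of $R$ containing $J(R)$ and ideals of $\overline{R}$ restricts to an isomorphism of the intervals $[M_1\cap M_2,\,R]$ and $[\overline{M_1}\cap\overline{M_2},\,\overline{R}]$, under which $M_1\cap M_2\mapsto\overline{M_1}\cap\overline{M_2}$. Reading off the four-element lattice yields all three assertions simultaneously: $M_1\cap M_2$ is covered by $M_1$ and by $M_2$ (hence submaximal, witnessed by either), and $M_1,M_2$ are the only maximal ideals containing it. The main obstacle is organizational rather than computational: one must justify that distinct maximal ideals of $\overline{R}$ are comaximal and that no maximal ideal of $R$ lying strictly between $M_1\cap M_2$ and $R$ escapes the correspondence, both of which follow once $J(R)\subseteq M_1\cap M_2$ and the Boolean structure of $\overline{R}$ are in place. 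Alternatively one could argue directly using Lemma~\ref{lem41} and Lemma~\ref{lem42} to produce the coverings and exclude extra maximal ideals, but the quotient argument is cleaner.
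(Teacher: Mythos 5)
Your proof is correct, but it takes a genuinely different route from the paper's. The paper argues entirely inside $R$: it takes an ideal $J$ with $I_1\cap I_2\subseteq J\subsetneqq I_1$, splits an element $x\in I_1\setminus J$ as $x=x_P+x_N$, uses Lemma~\ref{lem41} to push $x_N$ into $I_1\cap I_2$, and then invokes the Zorn's-lemma construction of Lemma~\ref{lem42} to manufacture a maximal ideal $M\supseteq J$ avoiding $x$, forcing $M=I_2$ and $J=I_1\cap I_2$; a separate semiprimeness argument handles the ``no other maximal ideals'' claim. You instead factor out $J(R)$, use Theorem~\ref{thm29} to see that $\overline{R}=R/J(R)$ is Boolean, note that distinct maximal ideals are comaximal, and apply the Chinese Remainder Theorem plus the correspondence theorem to identify the interval $[M_1\cap M_2,R]$ with the four-element ideal lattice of $\mathbb{Z}_2\times\mathbb{Z}_2$, from which all three assertions are read off at once. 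Your argument is shorter, bypasses Lemma~\ref{lem42} entirely, and yields extra information for free: it shows $R/(M_1\cap M_2)\cong\mathbb{Z}_2\times\mathbb{Z}_2$ is Boolean with four elements, which is exactly Corollary~\ref{cor45} and Case I of Corollary~\ref{cor44}, results the paper has to derive afterwards. What the paper's approach buys is self-containedness and reusable machinery: Lemmas~\ref{lem41} and \ref{lem42} are stated for arbitrary ideals and Lemma~\ref{lem41} is used again later, whereas your argument leans on general ring-theoretic tools. One small point you should make explicit: since $R$ need not be commutative, ``$J(R)$ is contained in every maximal (two-sided) ideal'' deserves a line of justification --- easiest here because $J(R)=N(R)$ is nil (proof of Lemma~\ref{lem27}), so if $J(R)+M=R$ then $1=j+m$ with $j$ nilpotent, making $m=1-j$ a unit inside $M$, a contradiction. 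With that sentence added, your proof is complete.
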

\begin{proof} Let $I_1$ and $I_2$ be two distinct maximal ideals
of $R$. Then $I_1\bigcap I_2\subsetneqq I_1$. Suppose $I_1\bigcap
I_2\subseteq J\subsetneqq I_1$. Then we can find some $x\in I_1$
while $x\not\in J$. Write $x_N^n=0$. Then $x_N^n\in I_1$. In light
of Lemma~\ref{lem41}, $x_N\in I_1$. Likewise, $x_N\in I_2$. Thus,
$x_N\in I_1\bigcap I_2\subseteq J$. This shows that $x_P\not\in
J$. By virtue of Lemma~\ref{lem42}, there exists a maximal ideal
$M$ of $R$ such that $J\subseteq M$ and $x\not\in M$. Hence,
$I_1\bigcap I_2\subseteq M$ and $I_1\neq M$. If $I_2\neq M$, then
$I_2+M=R$. Write $t+y=1$ with $t\in I_2, y\in M$. Then for any
$z\in I_1$, $z=zt+zy\in I_1\bigcap I_2+M=M$, and so $I_1=M$. This
gives a contradiction. Thus $I_2=M$, and then $J\subseteq
M\subseteq I_2$. As a result, $J\subseteq I_1\bigcap I_2$, and so
$I_1\bigcap I_2=J$. Therefore $I_1\bigcap I_2$ is a submaximal
ideal of $R$. We claim that $I_1\bigcap I_2$ is semiprime. If $K^2\subseteq I_1\bigcap I_2$, then for any $a\in
K$, we see that $a^2\in I_1\bigcap I_2$. In view of
Lemma~\ref{lem41}, $a\in I_1\bigcap I_2$. This implies that
$K\subseteq I_1\bigcap I_2$. Hence, $I_1\bigcap I_2$ is semiprime.
Therefore $I_1\bigcap I_2$ is the intersection of maximal ideals
containing $I_1\bigcap I_2$. Assume that $K$ is a maximal ideal of
$R$ such that $I_1\bigcap I_2\subseteq K$. If $K\neq I_1, I_2$,
then $I_1+K=I_2+K=R$. This implies that $I_1\bigcap I_2+K=R$, and
so $K=R$, a contradiction. Thus, $K=I_1$ or $K=I_2$, and so the
proof is completed.\end{proof}

We call a local ring $R$ {\em absolutely local} provided that for
any $0\neq x\in J(R)$, $J(R)=RxR$.

\begin{cor}\label{cor44} Let $R$ be strongly nil $*$-clean, and let $I$ be an ideal of $R$. Then
$I$ is a submaximal ideal if and only if $R/I$ is Boolean with
four elements or $R/I$ is absolutely local.
\end{cor}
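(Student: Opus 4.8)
The plan is to pass to the quotient ring $\overline{R}=R/I$ and rephrase submaximality internally. Since $R$ is strongly nil $*$-clean it is abelian by Lemma~\ref{21}, so $\overline{R}$ is abelian; moreover, by Theorem~\ref{thm29} the ring $R/J(R)$ is Boolean, and since $R/(J(R)+I)$ is a homomorphic image of $R/J(R)$ it is Boolean as well, whence $J(\overline{R})=(J(R)+I)/I$ and $\overline{R}/J(\overline{R})$ is Boolean. The decisive observation is that $I$ is submaximal in $R$ if and only if $\overline{R}$ contains an ideal $\overline{J}=J/I$ that is simultaneously a maximal ideal and a minimal nonzero ideal: the conditions $I\subsetneqq J\subsetneqq R$ translate into $0\neq\overline{J}\neq\overline{R}$, ``$J$ maximal'' into ``$\overline{J}$ maximal'', and ``no ideal strictly between $I$ and $J$'' into ``$\overline{J}$ minimal nonzero''. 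Everything then reduces to analysing such an ideal $\overline{J}$.

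For the forward implication I would distinguish two cases according to $J(\overline{R})$. If $J(\overline{R})=0$, then $\overline{R}=\overline{R}/J(\overline{R})$ is Boolean; writing the minimal nonzero ideal as $\overline{J}=e\overline{R}$ for an idempotent (necessarily atomic) $e$, minimality gives $e\overline{R}\cong\mathbb{Z}_2$, while maximality gives $\overline{R}/e\overline{R}\cong(1-e)\overline{R}\cong\mathbb{Z}_2$, so that $\overline{R}=e\overline{R}\oplus(1-e)\overline{R}$ is Boolean with four elements. If $J(\overline{R})\neq0$, then $J(\overline{R})\subseteq\overline{J}$ because $\overline{J}$ is a maximal ideal and $J(\overline{R})$ is the intersection of all of them; minimality of $\overline{J}$ together with $J(\overline{R})\neq0$ forces $J(\overline{R})=\overline{J}$, and since a maximal ideal equal to the Jacobson radical is the unique maximal ideal, $\overline{R}$ is local with maximal ideal $J(\overline{R})$. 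Finally, for any $0\neq x\in J(\overline{R})=\overline{J}$ the nonzero ideal $\overline{R}x\overline{R}\subseteq\overline{J}$ must equal $\overline{J}$ by minimality, i.e. $\overline{R}$ is absolutely local.

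For the converse, if $\overline{R}$ is Boolean with four elements then $\overline{R}\cong\mathbb{Z}_2\times\mathbb{Z}_2$, whose two maximal ideals are each minimal nonzero, so the zero ideal of $\overline{R}$ (that is, $I$) is submaximal. If $\overline{R}$ is absolutely local with $J(\overline{R})\neq0$, then $J(\overline{R})$ is its unique maximal ideal, and absolute locality makes it minimal nonzero (any nonzero ideal contained in it contains some $0\neq x$, hence contains $\overline{R}x\overline{R}=J(\overline{R})$); thus $\overline{J}=J(\overline{R})$ is both maximal and minimal nonzero, and $I$ is submaximal. The degenerate absolutely local case $J(\overline{R})=0$ only yields a field, which, being strongly nil $*$-clean, is $\mathbb{Z}_2$, and there $I$ is maximal rather than submaximal; so the absolutely local rings relevant here are exactly those with nonzero radical.

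The step I expect to be most delicate is the Boolean branch: turning ``$\overline{J}$ is at once maximal and a minimal nonzero ideal'' into the precise conclusion that $\overline{R}$ has exactly four elements, which requires the idempotent/atom decomposition $\overline{R}=e\overline{R}\oplus(1-e)\overline{R}$ with both summands isomorphic to $\mathbb{Z}_2$. A secondary point to handle carefully is the bookkeeping that submaximality excludes $I$ being maximal, which is why the local branch must be stated with $J(\overline{R})\neq0$.
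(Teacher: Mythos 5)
Your argument is correct in substance, but it takes a genuinely different route from the paper's. The paper splits according to how many maximal ideals of $R$ contain $I$: when there are at least two, it identifies $I$ with an intersection $J\cap J'$ of two maximal ideals and invokes Proposition~\ref{thm43} together with Lemma~\ref{lem41} to conclude that $R/I$ is Boolean with four elements; when there is exactly one, it shows all idempotents of $R/I$ are trivial and quotes exchange-ring theory (\cite[Lemma 17.2.1]{Ch}) to get locality, then uses the covering property to get absolute locality. You instead work entirely inside $\overline{R}=R/I$: the correspondence theorem turns submaximality of $I$ into the existence of an ideal $\overline{J}$ of $\overline{R}$ that is simultaneously maximal and minimal nonzero, you check $J(\overline{R})=(J(R)+I)/I$ so that $\overline{R}/J(\overline{R})$ is Boolean, and you split on whether $J(\overline{R})$ vanishes. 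This avoids Proposition~\ref{thm43}, Lemmas~\ref{lem41} and~\ref{lem42}, and the exchange-ring citations altogether, and it brings to the surface a degenerate case the paper's converse silently passes over: under the paper's definition a field is vacuously absolutely local, yet for $\overline{R}\cong\mathbb{Z}_2$ the ideal $I$ is maximal, not submaximal, so the absolutely local branch must be read with $J(\overline{R})\neq 0$; your final paragraph makes that correction explicit, which is a genuine improvement on the paper.

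Two steps need repair, both minor. First, a quotient of an abelian ring need not be abelian, so ``$R$ abelian, hence $\overline{R}$ abelian'' is unjustified as stated; fortunately it is not load-bearing (wherever you use idempotents, $\overline{R}$ is already Boolean, hence commutative; alternatively, abelianness of $\overline{R}$ does hold here because every idempotent of $\overline{R}$ is the image of a central idempotent of $R$: lift $\overline{x}$ to $x=e+w$ with $e$ central idempotent and $w$ nilpotent commuting with $e$, note $\overline{x}-\overline{e}$ is a nilpotent difference of commuting idempotents, hence zero). Second, the inference ``$J(\overline{R})$ is a maximal two-sided ideal, hence $\overline{R}$ is local'' is false for general rings: $M_2(\mathbb{Q})$ is simple with zero radical, so its radical is its unique maximal two-sided ideal, yet it is not local. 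In your situation the gap closes in one line from facts you already established: $\overline{R}/J(\overline{R})$ is Boolean and simple, hence isomorphic to $\mathbb{Z}_2$, a field, so $\overline{R}$ is local in the standard sense.
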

\begin{proof} Let $I$ be a submaximal ideal of $R$.

Case I. $I$ is contained in more than a maximal ideal. Then
$I$ is contained in two distinct maximal ideals of $R$. Since $I$
is submaximal, there exists a maximal ideal $J$ of $R$ such that
$I$ is covered by $J$. Thus, we have a maximal ideal $J'$ such
that $J'\neq J$ and $I\subsetneqq J'$. Hence, $I\subseteq J\bigcap
J'\subseteq J$. Clearly, $J\bigcap J'\neq J$ as $J+J'=R$, and so
$I=J\bigcap J'$. In view of Proposition~\ref{thm43}, there is no
maximal ideal containing $I$ except for $J$ and $J'$. This shows
that $R/I$ has only two maximal ideals covering $\{ 0+I\}$. For any
$a\in R$, it follows from Theorem~\ref{thm23} that $a-a^2\in R$
is nilpotent. Write $(a-a^2)^n=0$. Then $(a-a^2)^n\in J$.
According to Lemma~\ref{lem41}, $a-a^2\in J$. Likewise, $a-a^2\in
J'$. Thus, $a-a^2\in J\bigcap J'$, and so $a-a^2\in I$. This shows
that $R/I$ is Boolean. Therefore $R/I$ is Boolean with four
elements.

Case II. Suppose that $I$ is contained in only one maximal ideal
$J$ of $R$. Then $R/I$ has only one maximal ideal $J/I$. Clearly,
$R$ is an abelian exchange ring, and then so is $R/I$. Let
$\overline{e}\in R/I$ be a nontrivial idempotent. Then $I\subseteq
I+ReR\subseteq J$ or $I+ReR=R$. Likewise, $I\subseteq
I+R(1-e)R\subseteq J$ or $I+R(1-e)R=R$. This shows that $I+ReR=R$
or $I+R(1-e)R=R$ Thus, $(R/I)(e+I)(R/I)=R/I$ or
$(R/I)(1-e+I)(R/I)=R/I$, a contradiction. Therefore all
idempotents in $R/I$ are trivial. It follows from \cite[Lemma
17.2.1]{Ch} that $R/I$ is local. For any $\overline{0}\neq
\overline{x}\in J/I$, we see that $0\neq I\subseteq RxR\subseteq
J$. As $I$ is submaximal, we deduce that $J=RxR$. Therefore $R$ is
absolutely local.

Conversely, assume that $R/I$ is Boolean with four elements. Then
$R/I$ has precisely two maximal ideals covering $\{ 0+I\}$, and so
$R$ has precisely two maximal ideals covering $I$. Thus, we have a
maximal ideal $J$ such that $I\subsetneqq J$. If $I\subseteq
K\subseteq J$. Then $K=I$ or $K$ is maximal, and so $K=J$.
Consequently, $I$ is submaximal. Assume that $R/I$ is absolutely
local. Then $R/I$ has a uniquely maximal ideal $J/I$. Hence, $J$
is a maximal ideal of $R$ such that $I\subsetneqq J$. Assume that
$I\subsetneqq K\subseteq J$. Choose $a\in K$ while $a\not\in I$.
Then $J=RaR\subseteq K$, and so $K=J$. Therefore $I$ is
submaximal, as required.
\end{proof}

\begin{cor}\label{cor45} Let $R$ be strongly nil $*$-clean. If $I_1$ and $I_2$ are distinct maximal ideals of $R$,
then $R/(I_1\bigcap I_2)$ is Boolean.
\end{cor}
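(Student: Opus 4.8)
The plan is to reduce the claim to the single identity $a-a^2\in I_1\bigcap I_2$ for every $a\in R$, since saying that $R/(I_1\bigcap I_2)$ is Boolean is exactly saying that $\overline{a}=\overline{a}^{\,2}$ for all $a$ in the quotient. The engine is Theorem~\ref{thm23}: because $R$ is strongly nil $*$-clean, $R/N(R)$ is Boolean, so $a-a^2\in N(R)$ for every $a\in R$. Thus the whole problem is to transport the nilpotent element $a-a^2$ into the intersection of the two maximal ideals.

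First I would fix an arbitrary $a\in R$ and write $(a-a^2)^n=0$ for some $n\geq 1$, so in particular $(a-a^2)^n\in I_1$. Since $I_1$ is a maximal ideal of a strongly nil $*$-clean ring, Lemma~\ref{lem41}(2) applies verbatim and yields $a-a^2\in I_1$. Running the identical argument with $I_2$ in place of $I_1$ gives $a-a^2\in I_2$, and hence $a-a^2\in I_1\bigcap I_2$. As $a$ was arbitrary, every element of $R/(I_1\bigcap I_2)$ is idempotent, which is precisely the assertion that the quotient is Boolean. This mirrors the computation carried out in Case~I of the proof of Corollary~\ref{cor44}.

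An equivalent and slightly shorter route observes that, by Theorem~\ref{thm23}, $N(R)$ is an ideal all of whose elements are nilpotent, so $N(R)$ is a nil ideal and therefore $N(R)\subseteq J(R)$; since $J(R)$ is contained in every maximal ideal, $N(R)\subseteq I_1\bigcap I_2$, and the containment $a-a^2\in N(R)$ finishes the proof immediately. I would note that neither argument actually invokes the hypothesis $I_1\neq I_2$: distinctness is only needed for the sharper conclusion that the quotient is Boolean \emph{with four elements}, which follows by combining Proposition~\ref{thm43} (so that $I_1\bigcap I_2$ is submaximal and sits inside exactly the two maximal ideals $I_1,I_2$) with Case~I of Corollary~\ref{cor44}.

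I do not anticipate a serious obstacle here; the single point deserving care is the passage from $(a-a^2)^n\in I_i$ to $a-a^2\in I_i$, which is false for a general ring and genuinely relies on the radical-type property of maximal ideals recorded in Lemma~\ref{lem41}(2) (equivalently, on the fact that $N(R)\subseteq J(R)$ for a strongly nil $*$-clean ring). Everything else is a direct unwinding of definitions.
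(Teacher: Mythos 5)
Your proof is correct, but it takes a genuinely different route from the paper's. The paper deduces the corollary from its submaximal-ideal machinery: by Proposition~\ref{thm43}, $I_1\bigcap I_2$ is a submaximal ideal, and since $R/(I_1\bigcap I_2)$ has two distinct maximal ideals it is not local, so Corollary~\ref{cor44} forces it to be Boolean (in fact Boolean with four elements). Your argument is instead a direct element computation: $a-a^2\in N(R)$ by Theorem~\ref{thm23}, and this nilpotent is pushed into each $I_j$ either via Lemma~\ref{lem41}(2), or, in your shorter variant, via $N(R)\subseteq J(R)\subseteq I_1\bigcap I_2$ (the paper even proves $N(R)=J(R)$ inside the proof of Lemma~\ref{lem27}, and $J(R)$ lies in every maximal two-sided ideal since otherwise $J(R)+M=R$ would make $M$ contain a unit). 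What your route buys: it is shorter, bypasses the Zorn's-lemma argument of Lemma~\ref{lem42} that underlies Proposition~\ref{thm43}, does not use the hypothesis $I_1\neq I_2$, and generalizes at once to arbitrary intersections of maximal ideals --- it is really the observation that $R/J(R)$ is Boolean (Lemma~\ref{lem28}) together with the fact that a quotient of a Boolean ring is Boolean. What the paper's route buys is the sharper conclusion that $R/(I_1\bigcap I_2)$ has exactly four elements, which, as you correctly note at the end, requires distinctness and is recovered by combining Proposition~\ref{thm43} with Case~I of Corollary~\ref{cor44}; indeed your first route is essentially the computation already embedded in that Case~I, applied directly to $I_1\bigcap I_2$ without the submaximality detour.
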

\begin{proof} Since $I_1/(I_1\bigcap I_2)$ and $I_2/(I_1\bigcap
I_2)$ are distinct maximal ideals, $R/(I_1\bigcap I_2)$ is not local. In
view of Proposition~\ref{thm43}, $I_1\bigcap I_2$ is a submaximal
ideal of $R$. Therefore we complete the proof from
Corollary~\ref{cor44}.\end{proof}

Recall that an ideal $I$ of a commutative ring $R$ is {\em primary}
provided that for any $x,y\in R$, $xy\in I$ implies that $x\in I$
or $y^n\in I$ for some $n\in \mathbb{N}$. Clearly, every maximal
ideal of a commutative ring is primary. We end this article by
giving the relation between strongly nil $*$-clean rings and
$*$-Boolean rings.

\begin{lem}\label{lem46} Let $R$ be a commutative strongly nil $*$-clean ring. Then the intersection of all
primary ideals of $R$ is zero.\end{lem}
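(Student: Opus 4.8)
The plan is to produce, for each maximal ideal $M$ of $R$, a single primary ideal $Q_M$ in such a way that $\bigcap_M Q_M=0$; since every $Q_M$ is primary, this forces the intersection of \emph{all} primary ideals to be $0$. By Theorem~\ref{thm23}, $N(R)$ is an ideal of $R$ and $R/N(R)$ is Boolean. Fix a maximal ideal $M$, form the localization $R_M$, and let $Q_M$ be the contraction of the zero ideal along the canonical map $R\to R_M$, that is $Q_M=\{a\in R\mid sa=0 \text{ for some } s\notin M\}$. Because the contraction of a primary ideal is again primary, it suffices to show that $(0)$ is a primary ideal of $R_M$ and then that the family $\{Q_M\}$ has zero intersection.

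First I would check that every non-unit of $R_M$ is nilpotent; as zero-divisors are non-units, this makes $(0)$ primary in $R_M$. Given a non-unit $a/s$ of $R_M$ we have $a\in M$. Using the strongly nil $*$-clean decomposition (equivalently the uniquely strongly nil $*$-clean form of Proposition~\ref{prop21}), write $a=e+w$ with $e$ an idempotent, $w$ nilpotent, and $ew=we$. The idempotent $e$ must lie in $M$: otherwise $1-e\in M$, so $e$ would be a unit in $R_M$ and $a=e+w$ a unit plus a nilpotent, hence a unit, contradicting $a\in M$. From $e(1-e)=0$ together with $1-e\notin M$ I obtain $e/1=0$ in $R_M$, whence $a/1=w/1$ is nilpotent and therefore so is $a/s$. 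Thus $(0)$ is primary in $R_M$ and each $Q_M$ is a primary ideal of $R$.

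Finally I would verify $\bigcap_M Q_M=0$. For $0\neq a\in R$, the annihilator $\{r\in R\mid ra=0\}$ is a proper ideal, so it lies in some maximal ideal $M$; then no $s\notin M$ annihilates $a$, and hence $a\notin Q_M$. Consequently no nonzero element belongs to every $Q_M$, so the intersection of all primary ideals of $R$ is $0$. The step I expect to be the main obstacle is the nilpotent case: separating a nonzero nilpotent $x$ from some primary ideal is precisely what fails for prime ideals, since every prime contains $N(R)$. It is resolved uniformly by the localization argument above, in which the idempotent part of $x$ is killed in each $R_M$ while its nilpotent part survives, so the primary ideals $Q_M$ — unlike the primes — are able to avoid nonzero nilpotents.
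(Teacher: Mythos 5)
Your proof is correct, and it takes a genuinely different route from the paper's. The paper argues by contradiction: given $0\neq a$ in the intersection of all primary ideals, Zorn's Lemma produces an ideal $Q$ maximal among ideals not containing $a$, and the heart of the proof shows $Q$ is primary — using periodicity (Theorem~\ref{thm29}) to find $s$ with $y^s$ idempotent, identifying $y^s$ with the projection part $y_P$ of $y$, and then playing the maximality of $Q$ against $x$ and $y_P$ to force $a\in Q$, a contradiction. You instead exhibit an explicit family of primary ideals with zero intersection: the kernels $Q_M$ of the localization maps $R\to R_M$ over all maximal ideals $M$. Each of your steps checks out: a non-unit $a/s$ of $R_M$ has $a\in M$; the nil clean decomposition $a=e+w$ forces $e\in M$ (otherwise $1-e\in M$, so $e/1$ and hence $a/1$ would be a unit), whence $(1-e)e=0$ with $1-e\notin M$ gives $e/1=0$ and $a/s$ nilpotent; thus every non-unit of $R_M$ is nilpotent, $(0)$ is primary in $R_M$, contraction of a primary ideal is primary, and the standard annihilator argument (for $a\neq 0$, $\mathrm{Ann}(a)$ is proper, hence inside some $M$, so $a\notin Q_M$) gives $\bigcap_M Q_M=0$. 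What your route buys is brevity and conceptual clarity: it makes visible exactly why primary ideals, unlike primes, can avoid nonzero nilpotents — the idempotent part of an element dies in each $R_M$ while its nilpotent part survives. What the paper's route buys is self-containedness: it stays inside elementary ring theory (a Zorn argument in the same spirit as its Lemma~\ref{lem42}) and uses only tools it has already established, namely periodicity and the uniqueness of the projection--nilpotent decomposition from Proposition~\ref{prop21}, with no recourse to localization.
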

\begin{proof} Let $a$ be in the intersection of all
primary ideal of $R$. Assume that $a\neq 0$. Let $\Omega=\{
I~|~I~\mbox{is an ideal of}~R~\mbox{such that}~a\not\in I\}$. Then
$\Omega\neq \emptyset$ as $0\in \Omega$. Given any ideals
$I_1\subseteq I_2\subseteq \cdots $ in $\Omega$, we set
$M=\bigcup\limits_{i=1}^{\infty}I_i$. Then $M\in \Omega$. Thus,
$\Omega$ is inductive. By using Zorn's Lemma, we can find an ideal
$Q$ which is maximal in $\Omega$. It will suffice to show that $Q$
is primary. If not, we can find some $x,y\in R$ such that $xy\in
Q$, but $x\not\in Q$ and $y^n\not\in Q$ for any $n\in \mathbb{N}$.
This shows that $a\in Q+(x)$, and so $a=b+cx$ for some $b\in Q,
c\in R$. Since $R$ is strongly nil $*$-clean, it follows from
Theorem~\ref{thm29} that there are some distinct $k,l\in
\mathbb{N}$ such that $y^k=y^l$. Say $k>l$. Then $y^l=y^k=y^{l+1}y^{k-l-1}=y^lyy^{k-l-1}=y^{l+2}y^{2(k-l-1)}=\cdots =y^{2l}y^{l(k-l-1)}$. Hence, $y^{l(k-l)}=y^{l}(y^{l(k-l-1)})=y^{2l}y^{2l(k-l-1)}=\big(y^{l(k-l)}\big)^2$. Choose $s=l(k-l)$.
Then $y^s$ is an idempotent. Write $y=y_P+y_N$.
Then $y^s-y_P=(y_P+y_N)^s-y_P=y_N\big(sy_P+\cdots +y_N^{s-1}\big)\in
N(R)$. As $R$ is a commutative ring, we see that
$(y^s-y_P)^3=y^s-y_P$. This implies that $y^s=y_P$. Since $xy\in
Q$, we show that $xy^s\in Q$, and so $xy_P\in Q$. It follows from
$a=b+cx$ that $ay_P=by_P+cxy_P\in Q$. Clearly, $y^s\not\in Q$, and
so $a\in Q+(y_P)$. Write $a=d+ry_P$ for some $d\in Q, r\in R$. We
see that $ay_P=dy_P+ry_P$, and so $ry_P\in Q$. This implies that
$a\in Q$, a contradiction. Therefore $Q$ is primary, a contradiction.
Consequently, the intersection of all primary ideal
of $R$ is zero.
\end{proof}

\begin{thm}\label{thm47} Let $R$ be a $*$-ring. Then $R$ is a $*$-Boolean ring if and only if
\begin{itemize}
\item[{\rm(1)}] $R$ is commutative;
\item[{\rm(2)}] Every primary ideal of $R$ is maximal;
\item[{\rm(3)}] $R$ is strongly nil $*$-clean.
\end{itemize}
\end{thm}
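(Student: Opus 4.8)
The plan is to treat the two directions by reducing everything to the Boolean case via the characterizations already established. For the forward direction, suppose $R$ is $*$-Boolean, so every $a\in R$ satisfies $a=a^2=a^*$. Then in particular every element is idempotent, so $R$ is a Boolean ring; hence $R$ is commutative, giving (1), and $N(R)=\{0\}$ since $a^n=a$ for all $n$. Strong nil $*$-cleanness (3) is then immediate, writing $a=a+0$ with $a$ a projection and $0$ nilpotent. For (2), I would take an arbitrary primary ideal $Q$ and argue that $R/Q$ is again Boolean; since a primary quotient has every zero-divisor nilpotent and the only nilpotent in a Boolean ring is $0$, the ring $R/Q$ is an integral domain. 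A Boolean integral domain must be $\mathbb{Z}_2$ (from $a(a-1)=0$), so $R/Q$ is a field and $Q$ is maximal.

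For the converse, assume (1), (2), (3). Since $R$ is strongly nil $*$-clean, Theorem~\ref{thm23} tells us every idempotent is a projection and $a-a^2\in N(R)$ for every $a$. Thus it suffices to prove $N(R)=\{0\}$: this forces $a=a^2$ for all $a$, whence every element is an idempotent and therefore a projection, i.e. $R$ is $*$-Boolean. To get $N(R)=\{0\}$ I would use the hypothesis that primary and maximal ideals coincide. Every maximal ideal is primary automatically, and by (2) every primary ideal is maximal; hence the family of primary ideals equals the family of maximal ideals. By Lemma~\ref{lem46} the intersection of all primary ideals is zero, so the intersection of all maximal ideals is zero, that is $J(R)=0$. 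Since the nilradical is contained in the Jacobson radical, $N(R)\subseteq J(R)=0$, as wanted.

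The main obstacle is really concentrated in Lemma~\ref{lem46}, which supplies the vanishing of the intersection of primary ideals; once that is available the converse is short. The only other points to watch are routine: in the forward direction, confirming that a Boolean integral domain is a field, and in the converse, being careful that condition (1) of Theorem~\ref{thm23} (``every idempotent is a projection'') lets us upgrade ``idempotent'' to ``projection'' for free once we know $R$ is Boolean. No separate commutativity argument is needed in the converse since (1) is assumed throughout.
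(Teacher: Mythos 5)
Your proof is correct and takes essentially the same route as the paper: the converse hinges on Lemma~\ref{lem46} exactly as in the paper's proof (the families of primary and maximal ideals coincide, so $J(R)=0$, hence $N(R)=0$ and every element, being an idempotent equal to its projection part, is a projection). Your forward direction, showing $R/Q$ is a Boolean integral domain and hence the field $\mathbb{Z}_2$, is merely a quotient-ring repackaging of the paper's element-wise contradiction argument --- both rest on $x(1-x)=0$ together with primaryness of the ideal --- so the difference is cosmetic.
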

\begin{proof} Suppose that $R$ is a $*$-Boolean ring. Clearly, $R$ is a commutative strongly nil $*$-clean
ring. Let $I$ be a primary ideal of $R$. If $I$ is not maximal,
then there exists a maximal ideal $M$ such that $I\subsetneqq
M\subsetneqq R$. Choose $x\in M$ while $x\not\in I$. As $x$ is an
idempotent, we see that $xR(1-x)\subseteq I$, and so $(1-x)^m\in
I\subset M$ for some $m\in \mathbb{N}$. Thus, $1-x\in M$. This
implies that $1=x+(1-x)\in M$, a contradiction. Therefore $I$ is
maximal, as required.

Conversely, assume that $(1),(2)$ and $(3)$ hold. Clearly, every
maximal ideal of $R$ is primary, and so $J(R)=\bigcap \{ P~|~ P~
\mbox{is primary}\}$. In view of Lemma~\ref{lem46}, $J(R)=0$.
Hence every element is a projection i.e. $R$ is $*$-Boolean. \end{proof}

\begin{cor}\label{cor48} A ring $R$ is a Boolean ring if and only if
\begin{itemize}
\item[{\rm(1)}] $R$ is commutative;
\item[{\rm(2)}] Every primary ideal of $R$ is maximal;
\item[{\rm(3)}] $R$ is strongly nil clean.
\end{itemize}
\end{cor}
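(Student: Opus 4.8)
The plan is to deduce this corollary directly from Theorem~\ref{thm47} by specializing the involution to the identity map. The key observation is that any commutative ring $R$ carries the identity map as an involution: the axioms $(x+y)^*=x^*+y^*$ and $(x^*)^*=x$ are trivial, while $(xy)^*=xy=yx=y^*x^*$ uses precisely the commutativity of $R$. Thus, given a commutative $R$, I may freely regard it as a $*$-ring with $*=\mathrm{id}$ and invoke the earlier $*$-theory.

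Under the identity involution the starred notions collapse to their classical counterparts. Indeed, a projection is an element $p$ with $p^2=p=p^*$, and since $p^*=p$ holds automatically when $*=\mathrm{id}$, a projection is nothing but an idempotent. Consequently a $*$-Boolean ring (every element a projection) is exactly a Boolean ring, and a strongly nil $*$-clean ring (every element a projection plus a commuting nilpotent) is exactly a strongly nil clean ring. With this dictionary in hand, for the forward direction I would first recall that a Boolean ring is automatically commutative and of characteristic $2$, which is what legitimizes imposing $*=\mathrm{id}$; then Boolean equals $*$-Boolean, so Theorem~\ref{thm47} immediately yields conditions (1), (2) together with strongly nil $*$-cleanness, that is, strongly nil cleanness.

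For the converse I would start from hypotheses (1)--(3). Commutativity in (1) again lets me equip $R$ with $*=\mathrm{id}$; then (3) says $R$ is strongly nil $*$-clean and (2) is verbatim the hypothesis of Theorem~\ref{thm47}, so that theorem gives that $R$ is $*$-Boolean, hence Boolean. The only point requiring care---and the sole potential obstacle---is verifying that this translation between the starred and unstarred vocabularies is exact, namely that \emph{projection} degenerates to \emph{idempotent} and \emph{strongly nil $*$-clean} to \emph{strongly nil clean} once $*=\mathrm{id}$. Beyond this routine bookkeeping the corollary is an immediate instance of Theorem~\ref{thm47}, requiring no new ring-theoretic input.
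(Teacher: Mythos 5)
Your proposal is correct and is essentially identical to the paper's own proof, which simply says to choose the involution as the identity and apply Theorem~\ref{thm47}. Your additional care in checking that the identity map is an involution exactly when $R$ is commutative (legitimate in the forward direction since Boolean rings are commutative), and that projections then coincide with idempotents, is sound bookkeeping that the paper leaves implicit.
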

\begin{proof} Choose the involution as the identity. Then the result follows from Theorem~\ref{thm47}. \end{proof}
{\bf Acknowlegement} The authors are indepted to the referee for his/her valuable comments. H. Chen is thankful for
the support by the Natural Science Foundation of
Zhejiang Province (LY13A010019).

{\bf Huanyin Chen},\\
Department of Mathematics, \\ Hangzhou
Normal University, \\Hangzhou
310036, China\\
{\bf  e-mail}: huanyinchen@yahoo.cn\\\\
{\bf  Abdullah Harmanc\i \, and A. \c Ci\u gdem \" Ozcan}, \\
Hacettepe University,\\ Department of
Mathematics, \\
 06800 Beytepe Ankara, Turkey
\\ {\bf e-mails}: abdullahharmanci@gmail.com;\\
ozcan@hacettepe.edu.tr
\end{document}